\newtheorem{theorem}{Theorem}[section]
\newtheorem{rem}[theorem]{Remark}
\newenvironment{remark}{\begin{rem}\rm}{\end{rem}}
\newtheorem{proposition}[theorem]{Proposition}
\newtheorem{lemma}[theorem]{Lemma}
\newtheorem{corollary}[theorem]{Corollary}
\newtheorem{eg}[theorem]{Example}
\newenvironment{example}{\begin{eg}\rm}{\end{eg}}
\newtheorem{definition}[theorem]{Definition}
\newcommand{\C}{\mathbb{C}}
\newcommand{\Cl}{\mathbb{C}\mathrm{l}}
\newcommand{\Z}{\mathbb{Z}}
\newcommand{\R}{\mathbb{R}}
\newcommand{\V}{\mathcal{V}}
\newcommand{\W}{\mathcal{W}}
\renewcommand{\S}{\mathcal{S}}
\newcommand{\D}{\mathsf{D}}
\newcommand{\A}{\mathcal{A}}
\newcommand{\dbbar}{\overline{\partial}_b}
\newcommand{\jex}{\mathcal{J}(E,X)}
\newcommand{\g}{\mathfrak{g}}
\newcommand{\hatA}{\hat{\mathrm{A}}}
\renewcommand{\div}{\mathrm{div}}
\renewcommand{\phi}{\varphi}
\DeclareMathOperator{\Span}{span}
\DeclareMathOperator{\End}{End}
\DeclareMathOperator{\Img}{im}
\DeclareMathOperator{\Id}{Id}
\DeclareMathOperator{\rank}{rank}
\DeclareMathOperator{\ind}{index}
\DeclareMathOperator{\Ch}{Ch}
\DeclareMathOperator{\Td}{Td}
\DeclareMathOperator{\Str}{Str}
\DeclareMathOperator{\Tr}{Tr}
\DeclareMathOperator{\curv}{curv}
\DeclareMathOperator{\charr}{char}
\title{On transversally elliptic operators and the quantization of manifolds with $f$-structure}
\author{Sean Fitzpatrick\footnote{Research supported by an NSERC postdoctoral fellowship}\\
Department of Mathematics\\
University of California, Berkeley}
\begin{document}

\maketitle

\begin{abstract}
An $f$-structure on a manifold $M$ is an endomorphism field $\varphi\in\Gamma(M,\End(TM))$ such that $\varphi^3+\varphi=0$.  Any $f$-structure $\varphi$ determines an almost CR structure $E_{1,0}\subset T_\C M$ given by the $+i$-eigenbundle of $\varphi$.  Using a compatible metric $g$ and connection $\nabla$ on $M$, we construct an odd first-order differential operator $\D$, acting on sections of $\S=\Lambda E_{0,1}^*$, whose principal symbol is of the type considered in \cite{F2}.  In the special case of a CR-integrable almost $\S$-structure, we show that when $\nabla$ is the generalized Tanaka-Webster connection of Lotta and Pastore, the operator $\D$ is given by $\D = \sqrt{2}(\dbbar+\dbbar^*)$, where $\dbbar$ is the tangential Cauchy-Riemann operator.

We then describe two ``quantizations'' of manifolds with $f$-structure that reduce to familiar methods in symplectic geometry in the case that $\varphi$ is a compatible almost complex structure, and to the contact quantization defined in \cite{F4} when $\varphi$ comes from a contact metric structure.  The first is an index-theoretic approach involving the operator $\D$; for certain group actions $\D$ will be transversally elliptic, and using the results in \cite{F2}, we can give a Riemann-Roch type formula for its index.  The second approach uses an analogue of the polarized sections of a prequantum line bundle, with a CR structure playing the role of a complex polarization.
\end{abstract}
\section{Introduction}
Let $M$ be a smooth, compact manifold, and let $\varphi\in\Gamma(M,\End(TM))$ be an $f$-structure on $M$; that is, $\varphi$ is an endomorphism field satisfying
\begin{equation*}
 \varphi^3+\varphi = 0.
\end{equation*}
Such structures were introduced by Yano \cite{Yano}.
The complementary projection operators $l=-\varphi^2$ and $m=\varphi^2+\Id_{TM}$ determine a splitting $TM = \ker\varphi\oplus\Img\varphi$ of the tangent bundle of $M$. The restriction of $\varphi$ to $\Img \varphi$ squares to $-\Id_{\Img\varphi}$, and thus, as noted in \cite{LP}, an $f$-structure is equivalent to an almost CR structure together with a choice of complement to the Levi distribution.

In \cite{F2}, we used almost CR structures to construct new examples of transversally elliptic symbols (in the sense of Atiyah \cite{AT}), and gave a formula for their (cohomological) equivariant index.  In this paper we will give a construction of a first-order differential operator whose principal symbol is of the type considered in \cite{F2}.  Such an operator was introduced in the contact setting in \cite{F1}, and the general approach first appeared in the author's thesis \cite{F3}.  In \cite{F2} we required the existence of a subbundle $E\subset TM$ of constant rank, and a group action on $M$ such that the orbits of $G$ are transverse to the subbundle $E$, in a sense we will make precise.  While this construction does not produce the most general transversally elliptic operators, it does include many of the best-known examples of transversally elliptic operators (or symbols) encountered, for example, in \cite{AT,BV2,V1}.    

Given a manifold $M$ with $f$-structure $\varphi$, it is always possible to find a compatible metric $g$ and connection $\nabla$ \cite{Soare} satisfying
\[
g(\phi X,Y) + g(X,\phi Y) = 0 \quad \text{and} \quad \nabla\phi=\nabla g = 0.
\]
The eigenvalues of $\varphi$ (acting on $T_\C M :=TM\otimes\C$) are 0 and $\pm i$; we let $E=\Img \varphi$, and let $E_{1,0}\subset T_\C M$ denote the $+i$-eigenbundle of $\varphi$ which, as noted above, defines an almost CR structure on $M$.  We use the data $(\varphi, g,\nabla)$  to construct an odd first-order differential operator $\D$ acting on sections of $\S=\Lambda E^*_{0,1}$, where $E_{0,1} = \overline{E_{1,0}}$.  The construction is based on the usual construction of a Dirac operator on an almost Hermitian manifold (see \cite{BGV,Nic}): the metric $g$ allows us to construct the bundle of Clifford algebras $\Cl(E)$, whose fibre over $x\in M$ is the complexified Clifford algebra of $E^*_x$ with respect to the inner product induced by $g$.  The Clifford bundle then acts on $\S$ via the Clifford action $\mathbf{c}$ defined for $\alpha\in\Gamma(M,E^*)$ by $\mathbf{c}(\alpha)\gamma = \sqrt{2}\left(\alpha^{0,1}\wedge\gamma - \iota(\alpha^{1,0})\gamma\right)$, where the contraction is defined in terms of $g$.  The bundles $E^*$ and $T^*$ are orthogonal with respect to the metric $g$, and we let $\pi_{E^*}:T^*M\to E^*$ denote the orthogonal projection. We then define $\D$ by the composition
\[
\Gamma(M,\S)\xrightarrow[]{\nabla}\Gamma(M,T^*M\otimes\S)\xrightarrow[]{\pi_{E^*}}\Gamma(M,E^*\otimes\S)\xrightarrow[]{\mathbf{c}}\Gamma(M,\S).
\]
The principal symbol of $\D$ is given by $\sigma_P(\D)(x,\zeta) = i\mathbf{c}(\pi_{E^*}(\zeta_x))$ for $(x,\zeta)\in T^*M$, so that the results of \cite{F2} apply to the operator $\D$.

When the almost CR structure determined by $\varphi$ is CR-integrable, we can also define the $\dbbar$ operator of the resulting tangential CR complex, and construct another odd first-order differential operator acting on sections of $\S$, given in this case by $\D_b = \sqrt{2}(\dbbar+\dbbar^*)$, where $\dbbar^*$ denotes the formal adjoint of $\dbbar$, defined using the metric $g$.  This operator satisfies $\D_b^2 = 2\Box_b$, where $\Box_b$ denotes the Kohn-Rossi Laplacian \cite{KR} (see also \cite{FS}).  When $M$ is equipped with the additional structure of an {\em almost $\S$-manifold}, as defined in \cite{DIP}, Lotta and Pastore have shown \cite{LP} that there exists a canonical connection $\nabla^{LP}$ analogous to the Tanaka-Webster connection of a strongly pseudoconvex CR manifold of hypersurface type.  One of our main results is a proof that if we take $\nabla=\nabla^{LP}$ in the definition of $\D$ given above, then  $\D=\D_b$.

We then consider the case of a compact Lie group $G$ acting smoothly on $M$ such that $\varphi$, $g$ and $\nabla$ (and hence $\D$) are $G$-invariant.  Such group action preserves the splitting $T_\C M = E_{1,0}\oplus E_{0,1}\oplus (T\otimes\C)$, where $T=\ker\varphi$.  If $T$ is contained in the span of the vector fields generated by the infinitesimal action of $\mathfrak{g}$, then the operator $\D$ is $G$-transversally elliptic, since $\sigma_P(\D)$ is invertible for all nonzero $\alpha\in E^*_x$.  The equivariant index of $\D$ can therefore be defined as a distribution (i.e. generalized function) on $G$ \cite{AT}.  We can also twist this construction by an equivariant Hermitian vector bundle $\V$, and extend $\D$ to an operator $\D_\V$ acting on $\V$-valued differential forms.  In the almost $\S$ case, where the subbundle $T$ is trivial, the germ of the equivariant index of $\D$ near the identity element in $G$ is given (for $X\in\g$ sufficiently small) by the formula
\[
 \ind^G(\D_\V)(e^X) = \frac{1}{(2\pi i)^n}\int_M \Td(E,X)\Ch(\V,X)\jex,
\]
where $E\otimes\C = E_{1,0}\oplus E_{0,1}$, $n=\rank E/2$, and $\jex$ is an equivariant differential form with generalized coefficients defined as follows: Let $\theta\in\mathcal{A}^1(T^*M)$ denote the Liouville 1-form on $T^*M$, let $\iota:E^0\hookrightarrow T^*M$ denote the inclusion of the annihilator of $E$ (which we identify with $T^*$), and let $p:E^0\to M$ denote the projection mapping.  The equivariant differential of $\theta$ is given by $D\theta(X) = d\theta - \theta(X_M)$, where $X_M$ denotes the vector field generated by $X\in\g$, and $\jex$ is defined by
\[
 \jex = (2\pi i)^{-\rank T}p_*\iota^*e^{iD\theta(X)}.
\]
The general formula for the equivariant index near other elements of $G$, and for the case when $T$ is not trivial, is given similarly by
\[
 \ind^G(\mathsf{D})(ge^X) = \int_{M(g)}(2\pi i)^{-\rank E(g)/2}\Theta_g(X)\mathcal{J}(E(g),X),
\]
for $X\in\mathfrak{g}(g)$ sufficiently small, where
\[
 \Theta_g(X) = \frac{\Td(E(g),X)}{D_g^\C(\mathcal{N}_E,X)}\frac{\hatA^2(T(g),X)}{D_g(\mathcal{N}_T,X)}\Ch_g(\V,X).
\]
  An interesting special case is when $M=G/H$ is a complex homogeneous space, in which case the general index formula given above gives the character of a $G$-representation induced from a given $H$-representation $V$ (where we twist by $\V = G\times_H V$).  There are two obvious choices of $f$-structure on $G/H$; if we take the $f$-structure given by the complex structure, we obtain the holomorphic induced representation.  At the other extreme, we can take the $f$-structure $\phi=0$, in which case the character is that of the $L^2$ induced representation, by a result of Berline and Vergne \cite{BV0}.  (There are $f$-structures of intermediate rank as well; see \cite{F2}.)

In the final section of this article, we describe two ways of constructing a Hilbert space associated to a given $f$-structure.  We refer to these constructions as ``quantizations'' since special cases include well-known versions of the geometric quantization of a symplectic manifold (when $\phi$ is a compatible almost complex structure), as well as the two quantizations of contact manifolds defined in \cite{F4}.  In one approach, we define the quantization $Q(M)$ to be the $\Z_2$-graded Hilbert space $Q(M) = \ker \D\oplus\ker \D^*$.  Given a $G$-action on $M$ such that $\D$ is $G$-invariant, $Q(M)$ becomes a virtual $G$-representation, and when $\D$ is $G$-transversally elliptic, this representation has a well-defined (distributional) virtual character given by the equivariant index of $\D$.  When the fundamental 2-form $\Phi$ given by 
\[
 \Phi(X,Y) = g(\varphi X, Y)
\]
is closed, it defines a symplectic structure on the fibres of $E$. If there exists a Hermitian line bundle $\mathbb{L}$ equipped with a connection $\nabla^\mathbb{L}$ whose curvature form is equal to $i\Phi$, then $\mathbb{L}$ is a {\em quantum bundle} in the sense of \cite{DT}.  Using such a bundle, we can also produce an analogue of the Kostant-Souriau approach to geometric quantization, by defining a prequantization in the usual way.  If our $f$-structure is CR-integrable, the resulting CR structure is a natural analogue of a complex polarization; if in addition $\mathbb{L}$ is {\em CR holomorphic}, we can identify the space of polarized sections with the CR holomorphic $L^2$ sections of $\mathbb{L}$, and take this to be an alternative definition of $Q(M)$. 
Moreover, the Kostant algebra $\mathcal{K}(M,\Phi) = C^\infty(M)\times\Gamma(M,TM)$, which is equipped with the bracket \cite{Kost, Vais1}
\[
 [(f,X),(g,Y) = (X\cdot g-Y\cdot f + \Phi(X,Y),[X,Y]),
\]
has the representation on $Q(M)$ by the skew-Hermitian operators
\[
(f,X)\mapsto \nabla_X^\mathbb{L} + if.
\]
 In the almost $\S$ case, we show that a suitable example is given by the trivial bundle $M\times\C$. The subset $\mathcal{P}(M,\Phi)$ of the Kostant algebra given by
\[
\mathcal{P}(M,\Phi) = \{(f,X)\in C^\infty(M)\times \Gamma(M,TM) : df = \iota(X)\Phi\},
\]
is a Poisson algebra with respect to the multiplication $(f,X)\cdot (g,Y) = (fg,gX-fY)$ \cite{GGK}. In general there is no canonical notion of a Hamiltonian vector field associated to a function on $M$: any vector field $X$ such that $(f,X)\in \mathcal{P}(M,\Phi)$ is only defined up to sections of $T$,  and not every $f\in C^\infty(M)$ corresponds to a pair in $\mathcal{P}(M,\Phi)$, since if $(f,X)\in\mathcal{P}(M,\Phi)$, then $Yf = 0$ for any section $Y$ of $T$.  In the almost $\S$ case, we show that it is possible to assign a Hamiltonian vector field $X_f$ to each $f\in C^\infty(M)$ such that $\{f,g\} = \eta^i([X_f,X_g])$ defines a Lie bracket on $C^\infty(M)$.  (Here, $\{\eta^i\}$ defines a frame for $(\ker\phi)^*$; on an almost $\S$-manifold it is assumed that $\Phi = -d\eta^i$ for each $i$, whence the bracket does not depend on $i$.)  The resulting vector fields $X_f$ are not symmetries of the almost $\S$-structure unless $(f,X_f)\in\mathcal{P}(M,\Phi)$.

An example of a manifold with almost $\S$-structure is given by a principal $\mathbb{T}^k$-bundle $\pi:(M,\phi)\to (B,\omega)$ over a symplectic manifold $(B,\omega)$ with compatible almost complex structure $J$; we then have $\Phi=\pi^*\omega$ and $\{(f,X_f)\in\mathcal{P}(M,\Phi)\} = \pi^*C^\infty(B,\omega)$.  In this case, one way to think of the quantization of $M$ (at least, for the index-theoretic version) is as a family of quantizations of $(B,\omega)$ parametrized by the finite-dimensional irreducible $\mathbb{T}^k$-representations (consistent with the {\em free action axiom} for the equivariant index \cite{AT,BV2}).

\section{Geometric structures associated to a subbundle}
\subsection{CR and almost CR structures}
In \cite{F2} we concentrated mainly on the case of almost CR structures.  Recall (see \cite{BG} or \cite{DT}, for example) that an {\em almost CR} structure on a manifold $M$ is a constant rank subbundle $E_{1,0}\subset TM\otimes \C$ such that
\begin{equation}\label{aCR}
 E_{1,0}\cap E_{0,1} = 0,
\end{equation}
where $E_{0,1} = \overline{E_{1,0}}$.  An almost CR structure is said to be of type $(n,k)$ if $E_{1,0}$ has complex rank $n$, and $\dim M = 2n+k$.  The rank $2n$ subbundle $E\subset TM$ such that $E\otimes \C = E_{1,0}\oplus E_{0,1}$ is called the {\em Levi distribution} of the almost CR structure.  An almost CR structure is {\em CR-integrable} if the space of sections of $E_{1,0}$ is closed under the Lie bracket, in which case it is simply called a CR structure.

\begin{example}
Many CR manifolds arise as hypersurfaces in complex manifolds.  If $M\subset N$ with $N$ a complex manifold, the subbundle $E_{1,0}\subset T_\C M$ given by $E_{1,0} = T_\C M\cap T^{1,0}N$ defines a CR structure on $M$.

If $(M,E)$ is a contact manifold, then to a contact form $\alpha\in\Gamma(M,E^0\setminus 0)$ (where $E^0\subset T^*M$ denotes the annihilator of $E$) we can associate an almost CR structure on $M$ as follows: since $\alpha$ is a contact form, it follows that $(E,d\alpha)$ is a symplectic vector bundle over $M$, and thus we can choose a fibrewise complex structure on $E$ (that is, $J\in\Gamma(M,\End E)$ with $J^2 = -\Id_E$) that is compatible with the restriction of $d\alpha$ to $E\otimes E$.  Letting $E_{1,0}$ denote the $+i$-eigenbundle of $J$ determines an almost CR structure; if this structure is CR-integrable, then $M$ is a strongly pseudoconvex CR manifold of hypersurface type.  Our results in this particular case can be found in \cite{F1,F4}.
\end{example}

\subsection{The tangential CR complex}\label{crcomp}
While CR structures of type $(n,1)$ (hypersurface type) are the most commonly studied, we will focus in this paper on the case of almost CR structures of type $(n,k)$, for $k>1$, and the consequences of imposing additional conditions (such as integrability) on these structures when necessary.  Of course, our results apply to the cases $k=0$ and $k=1$ as well, but these are already well-served in the literature.

When $M$ is equipped with an (integrable) CR structure $E_{1,0}\subset T_\C M$, it is possible to construct the tangential Cauchy-Riemann complex of $(M,E_{1,0})$. We briefly recall the construction below, and refer the reader to \cite{BG,DT} for more details.  
\begin{remark}
The definition of the tangential CR complex in \cite{DT} is more general, since it works for a CR structure of arbitrary type $(n,k)$, and does not require a choice of any additional structure on $M$; however, it has the disadvantage that elements of the tangential CR complex are not identified with differential forms on $M$.  We will instead follow the approach of \cite{BG}.  The construction in \cite{BG} assumes the existence of a Hermitian inner product on the complexified Levi distribution $E\otimes\C$ such that $E_{1,0}$ and $E_{0,1}$ are orthogonal, and extends (the real part of) this inner product to a Riemannian metric on $TM$ by choosing an orthogonal complement to $E$ in $TM$.  The CR structures we will be dealing with come from $f$-structures, and as explained in Section \ref{fstruct} below,  a CR integrable $f$-structure determines a CR structure together with a choice of complement to the Levi distribution, and it is always possible to choose a metric that is compatible with the CR structure in the sense used in \cite{BG}.
\end{remark}
\begin{remark}
 We will follow the geometric convention that a metric {\em Hermitian} with respect to an almost complex (or almost CR) structure $J$ if $g(JX,JY) = g(X,Y)$ for all appropriate $X,Y$, and continue to use the same letter to denote its $\C$-bilinear extension to the complexification.  This is unfortunately inconsistent with the usual conventions in the theory of complex variables.  To avoid confusion, we will use the notation $\langle Z,W\rangle$ to denote the complex Hermitian metric corresponding to $g$, which is given in terms of $g$ by $\langle Z, W\rangle = g(Z,\overline{W})$, and refer to this as a Hermitian form, or Hermitian inner product, rather than a Hermitian metric. (For the induced inner product on 1-forms, we place the complex conjugation in the first entry.)  Thus, the spaces $E_{1,0}$ and $E_{0,1}$ are orthogonal with respect to the Hermitian inner product $\langle\cdot,\cdot\rangle$, while for the $\C$-bilinear extension of a Hermitian metric $g$, the spaces $E_{1,0}$ and $E_{0,1}$ are isotropic, and $g(Z,\overline{Z})>0$ for any nonzero $Z\in\Gamma(M,E_{1,0})$.
\end{remark}
Let us assume then that $E_{1,0}\subset T_\C M$ is a CR structure on $M$, and that we have chosen a splitting $TM=E\oplus T$, where $E\otimes \C = E_{1,0}\oplus E_{0,1}$.  This splitting gives us the dual splitting $T_\C^*M = E_{1,0}^*\oplus E_{0,1}^*\oplus(T^*\otimes\C)$.  Define $T_{1,0}^*M = E_{1,0}^*\oplus (T^*\otimes\C)$ and $T_{0,1}^*M = E_{0,1}^*$. We can then define the space of $(p,q)$-forms on $M$ by
\begin{equation}\label{CRcomp}
 \mathcal{A}^{p,q}(M,E_{1,0}) = \Gamma(M,\Lambda^p T_{1,0}^*M \wedge \Lambda^q T_{0,1}^*M).
\end{equation}
The space of $l$-forms on $M$ then decomposes according to
\[
 \mathcal{A}^l(M) = \mathcal{A}^{l,0}(M,E_{1,0})\oplus \mathcal{A}^{l-1,1}(M,E_{1,0})\oplus\cdots\oplus\mathcal{A}^{0,l}(M,E_{1,0}),
\]
where some of the above summands may be $\{0\}$ if $l>n$. The tangential $\dbbar$ operator $\dbbar:\mathcal{A}^{p,q}(M,E_{1,0})\to \mathcal{A}^{p,q+1}(M,E_{1,0})$ can then be defined by
\begin{equation}\label{dbbar}
 \dbbar = \pi^{p,q+1}\circ d,
\end{equation}
where $d:\mathcal{A}^{p+q}(M)\to \mathcal{A}^{p+q+1}(M)$ is the usual exterior derivative, and $\pi^{p,q+1}: \mathcal{A}^{p+q+1}(M)\to \mathcal{A}^{p,q+1}(M,E_{1,0})$ is the projection according to the decomposition above.  Using this approach, the operator $\dbbar$ can be defined even for an almost CR structure (this is not true of the definition in \cite{DT}). However, $\dbbar^2=0$ if and only if the almost CR structure is integrable \cite{BG}.  Note that on functions we have $\dbbar f(\overline{Z}) = \overline{Z}\cdot f$.
\begin{definition}
A function $f\in\C^\infty(M,\C)$ is called {\em CR-holomorphic} if it satisfies the tangential CR equations
\begin{equation}\label{tcreq}
\dbbar f = 0.
\end{equation}
\end{definition}
In the case that $M$ is a hypersurface in $\C^n$ the restriction to $M$ of any holomorphic function on $\C^n$ is CR holomorphic.  For a discussion of when the converse is true, see \cite{BG}.

Since $\dbbar^2=0$ on a CR manifold, the above defines a complex on $M$, called the tangential CR complex.  The cohomology of this complex is known as the {\em Kohn-Rossi} cohomology \cite{KR}, which we will denote by $H^{p,q}_{KR}(M,E_{1,0})$.  This complex can also be twisted by a complex vector bundle $\V$, provided that $\V$ is a CR-holomorphic vector bundle in the sense of \cite{DT} (after Tanaka \cite{Tanaka}):
\begin{definition}\label{CRhol}
A {\em CR-holomorphic vector bundle} is a complex vector bundle $\V\to(M,E_{1,0})$ over a CR manifold $(M,E_{1,0})$ equipped with an operator
\[
\overline{\partial}_{\V}:\Gamma(M,\V)\to\Gamma(M,E_{0,1}^*\otimes\V)
\]
such that for any $f\in\C^\infty(M,\C)$, $s\in\Gamma(M,\V)$ and $Z,W\in\Gamma(M,E_{1,0})$,
\begin{enumerate}[(i)]
\item $\overline{\partial}_\V(fs) = f\overline{\partial}_\V s + (\dbbar f)\otimes s$,
\item $[\overline{Z},\overline{W}]s = \overline{Z}\overline{W}s-\overline{W}\overline{Z}s$,
\end{enumerate}
where $\overline{Z}s = \iota(\overline{Z})(\overline{\partial}_\V s)$.
\end{definition}
In other words, in a local trivialization $\V|_U \cong U\times\C^N$, $(\overline{\partial}_\V s|_U)_i = \dbbar s_i$, where $s_1,\ldots, s_N$ are the corresponding components of $s|_U$. The operator $\overline{\partial}_\V$ can be extended to an operator
\begin{equation*}
\overline{\partial}_\V:\A^{0,q}(M,\V)\to \A^{0,q+1}(M,\V)
\end{equation*}
given by
\begin{equation}\label{extend}
 \overline{\partial}_\V(\alpha\otimes s) = (\dbbar \alpha)\otimes s + (-1)^{|\alpha|}\alpha\otimes\overline{\partial}_\V s.
\end{equation}
This operator satisfies $\overline{\partial}_\V^2 = 0$, allowing us to define the twisted Kohn-Rossi cohomology as the cohomology of the resulting complex.

\subsection{$f$-structures}\label{fstruct}
An {\em $f$-structure} on $M$ is an endomorphism field $\varphi\in\Gamma(M,\End TM)$ such that
\begin{equation}\label{fdef}
 \varphi^3+\varphi = 0.
\end{equation}
Such structures were introduced by K. Yano in \cite{Yano}. (See also the text \cite{KY2} for a comprehensive account.)  For the study of $f$-structures in Riemannian geometry we refer to Blair et al \cite{Blair2, BLY}.  By a result of Stong \cite{Stong}, every $f$-structure is necessarily of constant rank.
It is easy to check that the operators $l=-\varphi^2$ and $m=\varphi^2+\Id_{TM}$ are complementary projection operators; letting $E=l(TM) = \Img \varphi$ and $T=m(TM)=\ker \varphi$, we obtain the splitting
\begin{equation}\label{Tsplit}
 TM = E\oplus T = \Img \varphi\oplus \ker \varphi
\end{equation}
of the tangent bundle.  Since $(\varphi|_E)^2 = -\Id_E$, we see that $\varphi$ is necessarily of even rank, and that $\varphi$ determines a splitting $E\otimes\C = E_{1,0}\oplus E_{0,1}$ into the $\pm i$-eigenbundles of $\varphi|_E$.  Thus, as noted in \cite{LP}, an $f$-structure is equivalent to an almost CR structure, together with a choice of complement $T$ to the Levi distribution $E$. 
\begin{example}
 If $\rank \varphi = \dim M$, then $\varphi$ is an almost complex structure on $M$, and $M$ is even dimensional.  If $\rank \varphi = \dim M -1$, then $M$ must be odd dimensional, and the rank one subbundle $T$ must be trivial.  We can then choose a non-vanishing section $\xi$ of $T$, and dual section $\eta$ of $T^*\cong E^0$, such that $(\varphi,\xi,\eta)$ is an almost contact structure on $M$: that is, $\varphi^2 = -\Id + \eta\otimes\xi$, and $\phi(\xi) = \eta\circ\phi=0$.
\end{example}
As noted in the example above, when $\rank T = 1$, $T$ is necessarily trivial.  However, if $\rank T>1$, this need not be the case.  Thus a common (and convenient) additional assumption is that the complement $T$ is trivial, and that a trivializing frame $\{\xi_1,\ldots, \xi_k\}$ has been chosen.  An $f$-structure such that $T$ is trivial is called an {\em $f$-structure with parallelizable kernel} (or $f\cdot$pk-structure) in \cite{LP}.  If we choose a trivializing frame $\{\xi_i\}$ and corresponding coframe $\{\eta^i\}$ for $T^*$, with
\[
 \eta^i(\xi_j) = \delta^i_j,\quad \varphi(\xi_i) = \eta^j\circ\varphi = 0,\quad\text{and}\quad \varphi^2 = -\Id + \sum\eta^i\otimes \xi_i,
\]
then we have what is called an {\em $f$-structure with complemented frames} in \cite{BLY}.  We will at times implicitly assume that an $f\cdot$pk-structure includes a choice of frame and coframe, and adopt the more economical phrase `$f\cdot$pk-structure' in favour of `$f$-structure with complemented frames'.  Given an $f\cdot$pk-structure, it is always possible \cite{KY2} to find a Riemannian metric $g$ that is compatible with $(\varphi,\xi_i,\eta^j)$ in the sense that, for all $X,Y\in \Gamma(M,TM)$, we have
\begin{equation}\label{phig}
 g(X,Y) = g(\varphi X,\varphi Y)+\sum_{i=1}^k \eta^i(X)\eta^i(Y).
\end{equation}
Following \cite{LP}, we will call the 4-tuple $(\varphi,g,\xi_i,\eta^j)$ a {\em metric $f\cdot$pk structure}.  More generally, Soare \cite{Soare} defines a metric $g$ to be compatible with a general $f$-structure $\varphi$ if 
\begin{equation}\label{soare}
 g(\varphi X,Y) + g(X,\varphi Y) = 0,\quad \text{for all}\quad X,Y\in \Gamma(M,TM).
\end{equation}
An simple proof of the existence of a metric $g$ satisfying \eqref{soare} is given in \cite{Soare}, and it is easy to check that any metric satisfying \eqref{phig} also satisfies \eqref{soare}.
\begin{remark}
 If $E=TM$, a metric $f\cdot$pk structure is an almost Hermitian structure, while if $\rank T = 1$, then an $f\cdot$pk-structure is equivalent to an almost contact metric structure.
\end{remark}
Given a compatible pair $(\varphi, g)$, we can define the {\em fundamental 2-form} $\Phi\in\A^2(M)$ by
\begin{equation}\label{2form}
 \Phi(X,Y) = g(\varphi X, Y).
\end{equation}
For later convenience, our fundamental 2-form is the negative of the usual convention found for example in \cite{LP}, which places $\varphi$ in the second slot.  We have adjusted signs accordingly throughout.
An analogue of a contact metric manifold defined in \cite{DIP} is known as an {\em almost $\S$-structure}; this is a metric $f\cdot$pk structure for which $\Phi = -d\eta^i$ for each $i=1,\ldots k$.  An $f$-structure with complemented frames is {\em normal} \cite{BLY,KY2} if
\begin{equation}\label{normalf}
 [\varphi,\varphi] + \sum_{i=1}^k d\eta^i\otimes \xi_i = 0,
\end{equation}
where $[\varphi,\varphi]$ denotes the Nijenhuis tensor of $\varphi$, which is given by
\[
 [\varphi,\varphi](X,Y) = \varphi^2[X,Y]+[\varphi X,\varphi Y]-\varphi[\varphi X,Y]-\varphi[X,\varphi Y].
\]
An almost $\S$-structure that is normal is known as an $\S$-structure. (In \cite{BLY} an $\S$-structure is defined more generally to be a normal $f$-structure such that there exist constants $c_i$ with $\Phi = c_id\eta^i$ for each $i$.  We will assume that each $c_i$ is equal to one.) 
We see that $\Phi$ is antisymmetric from \eqref{soare}, and since $g$ is non-degenerate, it follows that the restriction of $\Phi$ to $E\otimes E$ is also non-degenerate.  Note that if $d\Phi = 0$, then $(E,\Phi|_{E\otimes E})$ is a symplectic vector bundle over $M$. A normal $f$-structure $(\varphi, \xi_i,\eta^j,g)$ with $d\Phi = 0$ is known as a $\mathcal{K}$-structure; in this case the vector fields $\xi^i$ are Killing fields for the metric $\eqref{phig}$ \cite{Blair2}.  In particular, this is true on a manifold with $\S$-structure.  

\subsection{Compatible connections for metric $f\cdot$pk-structures}
From \cite{Soare} we have the result that, given an $f$-structure $\varphi$ with compatible metric $g$, there always exists a connection $\nabla$ on $M$ adapted to the pair $(\varphi,g)$ in the sense that for any $X\in \Gamma(M,TM)$ we have
\begin{equation}\label{soaconn}
 \nabla_X\varphi = \nabla_X g = 0.
\end{equation}
Moreover, from \cite{LP} we have the existence of a canonical connection on a CR-integrable almost $\S$-manifold:
\begin{theorem}\cite{LP}
 Let $M$ be a metric $f\cdot$pk-manifold with structure $(\varphi,\xi_i,\eta^i,g)$.  Then $M$ is a CR-integrable almost $\S$-manifold if and only if there exists a unique linear connection $\nabla$ on $M$ such that
\begin{enumerate}
  \item $\nabla\varphi = \nabla g = \nabla \eta^i = 0$ ($i=1,\ldots, k$)
 \item The torsion $T_\nabla$ of $\nabla$ satisfies \begin{enumerate}
 \item $T_\nabla(X,Y) = -2\Phi(X,Y)\sum \xi_i$ for all $X,Y\in \Gamma(M,E)$
 \item $T_\nabla(\xi_i,\varphi X) = -\varphi T_\nabla(\xi_i,X)$ for all $X\in \Gamma(M,TM)$
 \item $T_\nabla(\xi_i,\xi_j) = 0$ for all $i,j\in \{1,\ldots, k\}$.
\end{enumerate}
\end{enumerate}
\end{theorem}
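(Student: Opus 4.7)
The biconditional splits into three pieces: uniqueness of $\nabla$, existence given that $M$ is CR-integrable almost $\S$, and recovery of these hypotheses from the mere existence of a connection with the listed properties. The argument parallels the classical Tanaka--Webster construction on strongly pseudoconvex hypersurface-type CR manifolds, and I would follow that template. Uniqueness is the easiest step: if $\nabla$ and $\nabla'$ both satisfy the hypotheses, the tensor $A(X,Y) = \nabla_X Y - \nabla'_X Y$ is symmetric in $(X,Y)$ because the torsions agree, and satisfies $g(A(X,Y),Z) = -g(A(X,Z),Y)$ because $\nabla g = \nabla' g = 0$; a three-line cyclic Koszul argument then forces $A\equiv 0$.

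For existence, the plan is to exploit $\nabla\varphi = 0$ and $\nabla\eta^i = 0$ to reduce the problem to determining $\nabla$ piece by piece on the $\nabla$-invariant decomposition $T_\C M = E_{1,0}\oplus E_{0,1}\oplus (T\otimes\C)$. Writing torsion condition (a) for a pair $(Z,\overline{U})$ with $Z,U\in\Gamma(E_{1,0})$ and projecting onto $E_{1,0}$ forces $\nabla_{\overline{U}} Z = [\overline{U},Z]^{1,0}$, with a conjugate prescription for $\nabla_Z\overline{U}$; one checks directly that this formula transforms like a connection. The remaining piece $\nabla_Z W$ for $Z,W\in\Gamma(E_{1,0})$ is pinned down by the Hermitian compatibility $Z\langle W,U\rangle = \langle \nabla_Z W, U\rangle + \langle W, \nabla_Z U\rangle$ together with CR-integrability, which gives $[Z,W]\in\Gamma(E_{1,0})$ and so is exactly the condition that allows $\nabla_Z W$ to lie in $E_{1,0}$ consistently with torsion (a). Condition (c) and $\nabla\eta^i = 0$ determine $\nabla|_T$, while (b) then fixes $\nabla_{\xi_i}$ on $E$. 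The final task is to verify that this piecewise $\nabla$ really has the prescribed torsion on $E\otimes E$; this is where the almost $\S$ hypothesis enters decisively, as the identity $\eta^i\circ T_\nabla = d\eta^i$ (valid for any connection with $\nabla\eta^i = 0$, up to the convention-dependent factor in $d$) reproduces the value prescribed by (a) precisely when all the $d\eta^i$ coincide and are proportional to $-\Phi$.

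The converse direction reverses these observations. If $\nabla$ exists with the listed properties, then $\nabla\varphi = 0$ makes $E_{1,0}$ invariant, so for $Z,W\in\Gamma(E_{1,0})$ the identity $[Z,W] = \nabla_Z W - \nabla_W Z - T_\nabla(Z,W)$ lies in $E_{1,0}$ once one notes that $\Phi(Z,W) = 0$ by the isotropy of $g|_{E_{1,0}\otimes E_{1,0}}$; this is CR-integrability. Feeding $\nabla\eta^i = 0$ into the Cartan formula for $d\eta^i$ and invoking torsion (a) yields $\Phi = -d\eta^i$ for each $i$. The main obstacle throughout the proof is the existence step---specifically, the verification that the Koszul-style prescriptions glue into a single connection with exactly the three advertised torsion properties. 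CR-integrability and the almost $\S$ hypothesis together provide the compatibility needed to carry this out: CR-integrability makes the $E_{1,0}$-component formulas tensorial and internally consistent, while $\Phi = -d\eta^i$ ensures that condition (a) is forced by the other parallelism constraints rather than being an additional, independent requirement.
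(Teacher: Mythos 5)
The paper offers no proof of this statement: it is cited directly from Lotta and Pastore \cite{LP}, so there is no internal argument to compare yours against. Your outline follows the expected Tanaka--Webster template, and the existence and converse steps are structured correctly; in particular you correctly isolate where CR-integrability enters (making $\nabla_Z W-\nabla_W Z=[Z,W]$ consistent with $T_\nabla(Z,W)=0$ on $E_{1,0}$) and where the almost $\S$ condition enters (forcing $\eta^i\circ T_\nabla=2\,d\eta^i=-2\Phi$ on $E\otimes E$).

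However, your uniqueness argument contains a genuine gap: the premise that ``the torsions agree'' is false. Conditions (a)--(c) do \emph{not} determine the full torsion of $\nabla$: on $T\otimes E$, condition (b) only requires $T_\nabla(\xi_i,\cdot)$ to anti-commute with $\varphi$, which leaves an undetermined component, so two admissible connections need not have equal torsion and the difference tensor $A(X,Y)=\nabla_XY-\nabla'_XY$ need not be symmetric. The symmetric/skew cyclic trick does dispose of $A$ on $E\otimes E$ (where (a) pins the torsion down completely and $E$ is parallel for both connections), and $A(\cdot,\xi_j)=0$ because $\nabla g=\nabla\eta^j=0$ force $\nabla\xi_j=0$ for both. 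But for $A(\xi_i,X)$ with $X\in\Gamma(M,E)$ a separate argument is required: $\nabla\varphi=\nabla'\varphi=0$ gives $A(\xi_i,\varphi X)=\varphi A(\xi_i,X)$, while condition (b) together with $\nabla\xi_i=\nabla'\xi_i=0$ gives $A(\xi_i,\varphi X)=-\varphi A(\xi_i,X)$; hence $\varphi A(\xi_i,X)=0$, so $A(\xi_i,X)\in\ker\varphi$, and then $\eta^j(A(\xi_i,X))=0$ for all $j$ forces $A(\xi_i,X)=0$. Without this step uniqueness is unproved. A smaller omission on the existence side: you should verify that the piecewise prescriptions assemble into a connection satisfying $\nabla g=0$ on all of $T_\C M$ (not only compatibility with the Hermitian pairing on $E_{1,0}\times E_{0,1}$), and that the $E$-valued formula for $\nabla_{\xi_i}X$ extracted from (b), namely $2\varphi\nabla_{\xi_i}X=[\xi_i,\varphi X]+\varphi[\xi_i,X]$, is well posed; these checks are routine but are precisely the glue the construction needs.
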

It follows from the above properties that $\nabla \xi_i = 0$ for all $i=1,\ldots, k$ as well, and that $E$ is parallel with respect to $\nabla$, in the sense that $\nabla_Y X\in \Gamma(M,E)$ for all $X\in \Gamma(M,E)$ and $Y\in\Gamma(M,TM)$.
The above connection is called the {\em generalized Tanaka-Webster connection} in \cite{LP}, as its properties are entirely analogous to those of the Tanaka-Webster connection on a non-degenerate CR manifold of hypersurface type (see \cite{DT}, for example).  Indeed, when $\varphi$ is of type $(n,1)$, the two definitions coincide.

Let $E_{1,0}\subset T_\C M$ denote the CR structure determined by a CR-integrable almost $\S$-structure as above.  As noted in \cite{LP}, CR-integrability is a weaker condition than normality; the almost $\S$-structure $\varphi$ is normal if and only if we in addition have that $T_\nabla(X,\xi_i) = 0$ for all $i=1,\ldots, k$ and all $X\in \Gamma(M,E)$.  

The above conditions can be rephrased in the context of CR geometry.  Included in \cite{LP} is a comparison of their results on the generalized Tanaka-Webster connection with similar results of Mizner \cite{Miz}. From this comparison, we obtain the fact, which we will need later, that property (i) of the torsion $T_\nabla$ above is equivalent to the requirement that for any $Z,W\in \Gamma(M,E_{1,0})$, we have
\begin{equation}\label{torsion}
 T_\nabla(Z,W) = T_\nabla(\overline{Z},\overline{W}) = 0.
\end{equation}

\section{Dirac operators associated to $f$-structures}\label{diracops}
Let us now explain how one can construct a first-order differential operator analogous to the Dolbeault-Dirac operator on any manifold with $f$-structure.  Suppose $M$ is a compact manifold equipped with an $f$-structure $\varphi$ of rank $2n$, and let $TM=E\oplus T$ be the splitting of $TM$ into the image and kernel of $\varphi$.  As noted above, we can then equip $M$ with a Riemannian metric that is compatible with $\varphi$ in the sense that $g(X,\varphi Y) + g(\varphi X,Y) = 0$ for all $X,Y\in \Gamma(M,TM)$.  It follows that $E$ and $T$ are orthogonal with respect to $g$, since if $X=\varphi Y \in \Gamma(M,E)$ and $Z\in \Gamma(M,T)$, then
\[
 g(X,Z) = g(\varphi Y, Z) = -g(Y,\varphi Z) = 0.
\]
Letting $\tilde{g} = g|_E$, and $J = \varphi|_E$ we have $J^2 = -\Id_E$, and $\tilde{g}(JX, JY) = \tilde{g}(X,Y)$ for all $X,Y\in \Gamma(M,E)$.  We then form the ``Clifford bundle'' $\Cl(E)$ whose fibre over $x\in M$ is the complexified Clifford algebra of $E^*_x$ with respect to the bilinear form on $E^*_x$ dual to $\tilde{g}_x$.

Since $J^2 = -\Id_E$, we have the decomposition $E\otimes \C = E_{1,0}\oplus E_{0,1}$ into the $\pm i$-eigenbundles of $J$; as noted above, $E_{1,0}\subset T_\C M$ defines an almost CR structure on $M$.  We can then define the bundle $\S = \Lambda E_{0,1}^*$, which is a Clifford module for $\Cl(E)$ with respect to the action of $\Cl(E)$ on $\S$ defined as follows:  Let $\alpha\in\Gamma(M,E^*)$, and write $\alpha = \alpha^{1,0}+\alpha^{0,1}$ with respect to the splitting of $E^*\otimes \C$ into the $\pm i$-eigenbundles of the complex structure induced on $E^*$ by $J$.  For any $\zeta\in\Gamma(M,\S)$, we set
\begin{equation}\label{clifmult}
 \mathbf{c}(\alpha)\zeta = \sqrt{2}\left(\alpha^{0,1}\wedge\zeta - \iota(\alpha^{1,0})\zeta\right),
\end{equation}
where the contraction is defined using the identification $E_{1,0}^* \cong \overline{E_{1,0}} = E_{0,1}$ determined by the $\C$-bilinear extension of $\tilde{g}$ to $E\otimes\C$.  This is a Clifford action since
\[
 \mathbf{c}(\alpha)^2 = -2\tilde{g}(\alpha^{1,0},\alpha^{0,1}) = -\tilde{g}(\alpha,\alpha).
\]
\begin{remark}
The contraction $\iota(\alpha^{1,0})$ is defined on decomposable elements $\zeta = \beta^1\wedge\cdots\wedge\beta^l$ by 
\[
\iota(\alpha^{1,0})\zeta = \sum(-1)^{i-1}g(\alpha^{1,0},\beta^i)\beta^1\wedge\cdots\wedge\widehat{\beta^i}\wedge\cdots\wedge\beta^l.
\]
If we wished to write this using the Hermitian inner product $\langle\cdot,\cdot\rangle$ instead, we would simply have to note that $g(\alpha^{1,0},\beta^i) = \langle \beta^i,\alpha^{0,1}\rangle$ (since $\alpha$ is real-valued, we have $\alpha^{0,1} = \overline{\alpha^{1,0}}$).  Thus, one often finds the contraction written as $\iota(\alpha^{0,1})$, rather than $\iota(\alpha^{1,0})$, as we do here.
\end{remark}
\begin{remark}\label{BGVClif}
The reader may find it useful to compare our approach to the construction given in \cite{BGV} for the case of a Hermitian manifold $(M,J,g)$.   Using $g$, one constructs the Clifford bundle $\Cl(TM)$, and $\S = \Lambda (T^{0,1}M)^*$ is a spinor module for $\Cl(TM)$ with respect to the Clifford action given by the same formula \eqref{clifmult} as above.  It is known that the Levi-Civita connection $\nabla^{LC}$ associated to the metric $g$ preserves the complex structure $J$ if and only if $(M,J,g)$ is K\"ahler.  In this case $\nabla^{LC}$ preserves the splitting $T_\C M = T^{1,0}M\oplus T^{0,1}M$, and also respects the Clifford multiplication in $\Cl(TM)$.  A connection $\nabla^\V$ on a Clifford module $\V\to M$ is known as a {\em Clifford connection} if it satisfies the following compatibility condition: for all $a\in \Gamma(M,\Cl(TM))$ and all $X\in \Gamma(M,TM)$, we have
\begin{equation}\label{clifconn}
 \left[\nabla^\V_X,\mathbf{c}(a)\right] = \mathbf{c}(\nabla^{LC}_Xa).
\end{equation}
Given a Clifford connection $\nabla^\V$ on a Clifford module $\V\to M$, one can then define a differential operator $\mathsf{D}$ on $\Gamma(M,\V)$ by the composition $\mathsf{D} = \mathbf{c}\circ\nabla^\V$.  An example of a Clifford connection is the connection $\nabla^\S$ given by the connection induced by $\nabla^{LC}$ on $\S$.  Moreover, it is known that any Clifford module $\V$ is locally of the form $\V = \S\otimes \W$ for some complex vector bundle $\W\to M$ with connection $\nabla^\W$, and the connection $\nabla^\V = \nabla^\S\otimes \Id_\W + \Id_\S\otimes \nabla^\W$ is a Clifford connection with respect to the Clifford action $\mathbf{c}(a)\otimes\Id_\W$.  (If $M$ is a spin manifold then this is true globally; this is discussed for example in \cite{Nic}.)
\end{remark}
Let us now return to the case where $M$ is a manifold with $f$-structure $\varphi$ and compatible metric $g$.  From \cite{Soare}, we know that we can find a connection $\nabla$ on $M$ (which unlike $\nabla^{LC}$ will generally have torsion) that preserves both $\varphi$ and $g$.  It follows that $\nabla$ preserves the splitting $E\otimes \C = E_{1,0}\oplus E_{0,1}$ and hence induces a connection $\nabla^\S$ on $\S = \Lambda E_{0,1}^*$.  Since $\nabla$ preserves $g$, it respects the Clifford product on $\Cl(E)$ (that is, $\nabla(ab) = (\nabla a)b+a\nabla b$), and $\nabla^\S$ satisfies a compatibility similar to \eqref{clifconn} above:
\begin{proposition}
 Let $\nabla$ be a connection on $(M,\varphi,g)$ such that $\nabla\varphi = \nabla g = 0$.  Then for any $X\in \Gamma(M,TM)$, the connection $\nabla^\S$ induced by $\nabla$ on $\S$ satisfies $\left[\nabla^\S_X,\mathbf{c}(a)\right] = \mathbf{c}(\nabla_X a).$
\end{proposition}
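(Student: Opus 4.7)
My plan is to reduce the identity to the generating case $a \in \Gamma(M, E^*)$ and then verify the two halves of the Clifford action (wedge and contraction) separately, using $\nabla\varphi=0$ for the type decomposition and $\nabla g=0$ for the contraction.

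First, I would observe that both sides of the claimed identity are derivations in $a$ with respect to Clifford multiplication. Explicitly, for $a,b \in \Gamma(M,\Cl(E))$,
\[
[\nabla^\S_X, \mathbf{c}(a)\mathbf{c}(b)] = [\nabla^\S_X, \mathbf{c}(a)]\mathbf{c}(b) + \mathbf{c}(a)[\nabla^\S_X, \mathbf{c}(b)],
\]
while on the right, since $\nabla$ acts as a derivation on the Clifford product, $\mathbf{c}(\nabla_X(ab)) = \mathbf{c}(\nabla_X a)\mathbf{c}(b) + \mathbf{c}(a)\mathbf{c}(\nabla_X b)$. Because $\Cl(E)$ is generated fibrewise by $E^*$, a simple induction reduces the proposition to the case $a = \alpha \in \Gamma(M, E^*)$.

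Next, for $\alpha \in \Gamma(M,E^*)$, I would use the hypothesis $\nabla\varphi=0$, which implies that $\nabla$ preserves the dual splitting $E^*\otimes\C = E^*_{1,0}\oplus E^*_{0,1}$. In particular, $\nabla_X(\alpha^{0,1}) = (\nabla_X\alpha)^{0,1}$ and $\nabla_X(\alpha^{1,0}) = (\nabla_X\alpha)^{1,0}$. Since $\nabla^\S$ is the connection induced on $\Lambda E_{0,1}^*$ by $\nabla$, it acts as a derivation with respect to exterior multiplication, so
\[
\nabla^\S_X(\alpha^{0,1}\wedge\zeta) = (\nabla_X\alpha)^{0,1}\wedge\zeta + \alpha^{0,1}\wedge\nabla^\S_X\zeta,
\]
which handles the wedge piece of $\mathbf{c}(\alpha)$.

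The main work is showing the corresponding Leibniz rule for the contraction:
\[
\nabla^\S_X\bigl(\iota(\alpha^{1,0})\zeta\bigr) = \iota\bigl((\nabla_X\alpha)^{1,0}\bigr)\zeta + \iota(\alpha^{1,0})\nabla^\S_X\zeta.
\]
This is the step I expect to require the most care, because $\iota(\alpha^{1,0})$ is defined via the $\C$-bilinear extension of $g$, which pairs $E^*_{1,0}$ with $E^*_{0,1}$. Here is where the assumption $\nabla g = 0$ is essential: it implies that this pairing is a parallel section of $(E^*_{1,0})^*\otimes (E^*_{0,1})^*$, so contraction commutes with covariant differentiation up to Leibniz. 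Concretely, I would verify the formula on decomposable $\zeta = \beta^1\wedge\cdots\wedge\beta^l$ using
\[
\nabla^\S_X\zeta = \sum_i \beta^1\wedge\cdots\wedge\nabla_X\beta^i\wedge\cdots\wedge\beta^l,
\]
expand $\iota(\alpha^{1,0})\zeta$ by the definition in the remark, and differentiate under the pairing $g(\alpha^{1,0},\beta^i)$, using $\nabla g=0$ to move $\nabla_X$ onto $\alpha^{1,0}$ and the $\beta^i$ separately.

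Finally, combining the two identities gives
\[
\nabla^\S_X(\mathbf{c}(\alpha)\zeta) = \mathbf{c}(\nabla_X\alpha)\zeta + \mathbf{c}(\alpha)\nabla^\S_X\zeta,
\]
which is the proposition for $a = \alpha \in \Gamma(M,E^*)$. Combined with the derivation argument from the first step, this proves the result for arbitrary $a \in \Gamma(M,\Cl(E))$. The whole proof is thus a careful unwinding of definitions; the only nontrivial input is that $\nabla\varphi = 0$ gives parallel type decomposition and $\nabla g = 0$ gives parallel pairing, which together make $\mathbf{c}$ itself a parallel bundle map.
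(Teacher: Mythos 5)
Your proposal is correct and follows essentially the same route as the paper: reduce to a generating $1$-form via the derivation property of both sides, then apply the Leibniz rule to the wedge and contraction halves of $\mathbf{c}(\alpha)$, with $\nabla\varphi=0$ giving the parallel type decomposition and $\nabla g=0$ making the contraction compatible with $\nabla$. The paper's computation leaves these last two points implicit, so your version is simply a slightly more detailed unwinding of the same argument.
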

\begin{proof}
 Since $\nabla$ respects the Clifford multiplication, it suffices to check the result for a 1-form $\alpha\in\Gamma(M,E^*)$. For any $\nu\in\Gamma(M,\S)$, we have
\begin{align*}
 \left[\nabla^\S_X,\mathbf{c}(\alpha)\right]\nu & = \sqrt{2}\nabla^\S_X\left(\alpha^{0,1}\wedge\nu - \iota(\alpha^{1,0})\nu\right) - \sqrt{2}\left(\alpha^{0,1}\wedge \nabla^\S_X\nu - \iota(\alpha^{1,0})\nabla^\S_X\nu\right)\\
& = \sqrt{2}\left(\nabla_X \alpha^{0,1}\wedge\nu + \alpha^{0,1}\wedge\nabla^\S_X\nu - \iota(\nabla_X\alpha^{1,0})\nu\right.\\
& \quad\quad\quad\quad\quad \left.- \iota(\alpha^{1,0})\nabla^\S_X\nu - \alpha^{0,1}\wedge\nabla^\S_X\nu + \iota(\alpha^{1,0})\nabla_X^\S\nu\right)\\
& = \mathbf{c}(\nabla_X\alpha)\nu.\qedhere
\end{align*}
\end{proof}
Using the connection $\nabla^\S$, we can define an odd first-order differential operator 
\[
\mathsf{D}:\Gamma(M,\S^+)\to\Gamma(M,\S^-)
\]
(where $\S^\pm$ denote the subbundles of even and odd forms) by the composition
\begin{equation}\label{Ddef}
 D: \Gamma(M,\S^+)\xrightarrow{\nabla^\S}\Gamma(M,T^*M\otimes\S^+)\xrightarrow{\pi_{E^*}}\Gamma(M,E^*\otimes\S^+)\xrightarrow{\mathbf{c}}\Gamma(M,\S^-),
\end{equation}
where $\pi_{E^*}$ denotes orthogonal projection with respect to $g$.
Motivated by Remark \ref{BGVClif}, we make the following definition:
\begin{definition}
 Let $(M,\varphi,g)$ be a manifold with $f$-structure $\varphi$ and compatible metric $g$, and let $\nabla$ be a metric on $M$ that preserves $\varphi$ and $g$.  Let $\V\to M$ be a vector bundle over $M$ equipped with an action of $\Cl(E)$ and a connection $\nabla^\V$.  We say that the connection $\nabla^\V$ is {\em compatible} with $\nabla$ and the Clifford action if
\begin{equation}
 [\nabla^\V_X,\mathbf{c}(a)] = \mathbf{c}(\nabla_X a)
\end{equation}
for all $a\in\Cl(E)$ and all $X\in\Gamma(M,TM)$.
\end{definition}
For any $\mathbb{Z}_2$-graded Clifford module $\V=\V^+\oplus\V^-$ for $\Cl(E)$, let  $\nabla^\V$ be a compatible connection as defined above. We can then define a differential operator $\mathsf{D}:\Gamma(M,\V^+)\to \Gamma(M,\V^-)$ by the expression \eqref{Ddef} above, by simply replacing $\nabla^\S$ by $\nabla^\V$.  In particular, let $\V = \S\otimes\W$ for some complex vector bundle $\W\to M$ with connection $\nabla^\W$, and equip $\V$ with the product connection $\nabla^\V = \nabla^\S\otimes \Id_\W + \Id_\S\otimes \nabla^\W$.  It's easy to check (the calculation is identical to the one done in \cite{F4}) that $\nabla^\V$ is a compatible connection for the Clifford action $a\mapsto\mathbf{c}(a)\otimes\Id_\W$, allowing us to define the operator $\mathsf{D}_\W:\Gamma(M,\S^+\otimes\W)\to\Gamma(M,\S^-\otimes\W)$ using the connection $\nabla^\V$.
In the case $E=TM$, this definition agrees with the definition of a compatible connection in \cite{Nic}, and thus our operators of the form \eqref{Ddef} can be seen as a generalization of the geometric Dirac operators in \cite{Nic}.  We will thus refer to our operators as ``Dirac'' operators, even though they are not elliptic, except in the case $E=TM$.

\subsection{The CR-integrable almost $\S$ case}\label{3.1}
Let us recall briefly (from \cite{BGV}, for example) that in the case of a K\"ahler manifold, if we take $E=TM$, with $\nabla$ given by the Levi-Civita connection, and if $(\W,\overline{\partial}_\W)\to M$ is a holomorphic vector bundle (where $\overline{\partial}_\W:\mathcal{A}^{p,q}(M,\W)\to\mathcal{A}^{p,q+1}(M,\W)$) equipped with a Hermitian inner product $h$, then there exists a canonical Hermitian connection $\nabla^\W$ on $\W$ such that $\nabla^\W h=0$ and $\nabla^{0,1}:=\nabla^\W|_{T^{0,1}M} = \overline{\partial}_\W$. The Dirac operator $\mathsf{D}_\W$ associated to the tensor product connection on $\Lambda (T^{0,1}M)^*\otimes \W$ is then given by
\begin{equation}\label{holdir}
 \mathsf{D}_\W = \sqrt{2}(\overline{\partial}_\W + \overline{\partial}_\W^*).
\end{equation}
In \cite{F4}, we showed that an analogue of this result holds in the case of a strongly pseudoconvex CR manifold of hypersurface type, if we take $\nabla$ to be the Tanaka-Webster connection of the CR manifold, and take $\W$ to be a CR-holomorphic vector bundle.  Since the generalized Tanaka-Webster connection of \cite{LP} enjoys the same properties as the Tanaka-Webster connection used in the contact case, it's natural to expect that a similar result should hold when our $f$-structure is a CR integrable almost $\S$-structure.

Let us suppose then, that $M$ is an almost $\S$-manifold. Thus, $T=\ker \varphi$ is trivial, and equipped with a frame $\{\xi_i\}$ and corresponding coframe $\{\eta^i\}$ for $T^*$, our metric $g$ can be chosen such that it satisfies \eqref{phig} above, and for each $i=1,\ldots, k$ we have $\Phi = -d\eta^i$, where $\Phi$ is the fundamental 2-form associated to $(\varphi,g)$.  
Let $E\otimes\C = E_{1,0}\oplus E_{0,1}$ be the splitting of $E\otimes \C$ into the $\pm i$-eigenbundles of $\varphi|_E$, and suppose that $E_{1,0}$ defines a CR structure on $M$.  Then we may take $\nabla = \nabla^{LP}$, where $\nabla^{LP}$ denotes the generalized Tanaka-Webster connection of Lotta and Pastore.  Since $\nabla^{LP}\varphi=\nabla^{LP}g = \nabla^{LP}\eta^i = 0$, it follows that $\nabla^{LP}$ preserves the decomposition $T_\C M = E_{1,0}\oplus E_{0,1}\oplus (T\otimes\C)$, and that it respects the Clifford product in $\Cl(E)$.

We now come to one of the main results of this paper: a description of the operator $\mathsf{D}:\Gamma(M,\S^+)\to\Gamma(M,\S^-)$ given by \eqref{Ddef} in terms of the $\dbbar$ operator of the tangential CR complex of $(M,E_{1,0})$.  We are interested in the part of this complex given by
\[
 0\to C^\infty(M)\xrightarrow{\dbbar}\mathcal{A}^{0,1}(M,E_{1,0})\to\cdots\to\mathcal{A}^{0,n}(M,E_{1,0})\to 0.
\]
Using the compatible metric $g$ we define the Hermitian inner product $\langle Z,W\rangle = g(Z,\overline{W})$ on $T_\C M$ with respect to which $E_{1,0}$, $E_{0,1}$, and $T\otimes\C$ are mutually orthogonal.  This induces a pairing $\left<\cdot,\cdot\right>:\mathcal{A}^{i}(M)\times\mathcal{A}^{i}(M)\to C^\infty(M)$ using which we define the inner product
\begin{equation}\label{inprod}
 \left(\psi,\zeta\right) = \int_M\left<\psi,\zeta\right>\mu,
\end{equation}
where $\mu$ is the volume form on $M$ given by 
\begin{equation}\label{mu}
\mu = \eta^1\wedge\cdots\wedge\eta^k\wedge\Phi^n.
\end{equation}
As in \cite{Kohn}, we use the inner product to define the formal adjoint
\[
\dbbar^*:\mathcal{A}^{0,q}(M,E_{1,0})\to\mathcal{A}^{0,q-1}(M,E_{1,0}), 
\]
 given for $\psi\in\mathcal{A}^{0,q}(M,E_{1,0})$ and $\zeta\in\mathcal{A}^{0,q-1}(M,E_{1,0})$ by
\[
 \left(\dbbar^*\psi,\zeta\right) = \left(\psi,\dbbar\zeta\right).
\]
This allows us to construct the operator
\begin{equation}\label{Db}
 \mathsf{D}_b = \sqrt{2}\left(\dbbar+\dbbar^*\right):\Gamma(M,\S)\to\Gamma(M,\S).
\end{equation}
The CR integrability of $E_{1,0}$ implies that $\frac{1}{2}\mathsf{D}_b^2 = \dbbar\dbbar^*+\dbbar^*\dbbar = \Box_b$, the Kohn-Rossi Laplacian \cite{FS,KR}.  Given the action of a group $G$ on $M$ preserving the almost $\S$-structure, the operator $\D_b$ will be $G$-invariant. Letting $[\ker \D_b^+]$ and $[\ker \D_b^-]$ denote the resulting isomorphism classes of $G$-representations, we define the equivariant index of $\D_b$ as the virtual representation
\begin{equation}\label{Geqind}
 \ind^G(\D_b) = [\ker \D_b^+] - [\ker\D_b^-].
\end{equation}
\begin{remark}\label{GGKrem}
 Except in the case that $E=TM$, the virtual representation given by \eqref{Geqind} is infinite-dimensional, and in general it is not clear how to make sense of the above expression; see for example the discussion in \cite[Remark 6.36]{GGK}.  However, as noted there, one can make sense of such expressions for unitary representations in which each finite-dimensional representation occurs with finite multiplicity.  By a result of Atiyah \cite{AT}, this is the case whenever $\D_b$ is transversally elliptic, which is the situation we will consider below.  
\end{remark}
We now come to the main result of this section, which relates the above discussion to our ``geometric Dirac'' operators in the case of an almost $\S$-manifold.
\begin{theorem}\label{main1}
 Let $M$ be a CR-integrable almost $\S$-manifold equipped with the generalized Tanaka-Webster connection $\nabla^{LP}$, and let $\nabla^\S$ be the induced connection on $\S = \Lambda E_{0,1}^*$.  If $\D$ is the operator given by \eqref{Ddef}, then we have the equality
\[
 \D_b = \D.
\]
\end{theorem}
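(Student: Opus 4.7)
My plan is to fix a local unitary frame for $E_{1,0}$ and match $\D$ and $\D_b$ piece by piece in that frame. Choose $\{Z_j\}_{j=1}^n$ a local frame for $E_{1,0}$ orthonormal with respect to the Hermitian form $\langle\cdot,\cdot\rangle$, with dual $(1,0)$-coframe $\{\theta^j\}$. Expanding the definition \eqref{Ddef} and using \eqref{clifmult}, I would first observe
\[
\D = \sqrt{2}\sum_j \overline{\theta}^j \wedge \nabla^\S_{\overline{Z}_j} - \sqrt{2}\sum_j \iota(\theta^j)\nabla^\S_{Z_j}.
\]
The two summands raise and lower $(0,\bullet)$-degree, exactly like $\sqrt{2}\dbbar$ and $\sqrt{2}\dbbar^*$, so the task reduces to identifying each summand with its counterpart.

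The $\dbbar$ side is the easier half. Apply the general identity expressing $d$ in terms of a connection with torsion,
\[
d\sigma(X_0,\ldots,X_q) = \sum_i(-1)^i(\nabla_{X_i}\sigma)(\ldots,\widehat{X_i},\ldots) + \sum_{i<j}(-1)^{i+j}\sigma(T_\nabla(X_i,X_j),\ldots),
\]
to $\sigma \in \mathcal{A}^{0,q}(M,E_{1,0})$ evaluated on conjugate holomorphic vectors $\overline{Z}_0,\ldots,\overline{Z}_q$, and then project onto $\mathcal{A}^{0,q+1}(M,E_{1,0})$. The torsion sum vanishes by \eqref{torsion}, and $\nabla^{LP}$-parallelism of the splitting $T_\C M = E_{1,0}\oplus E_{0,1}\oplus(T\otimes\C)$ keeps the derivative terms inside $\mathcal{A}^{0,q+1}$, yielding $\dbbar = \sum_j \overline{\theta}^j \wedge \nabla^\S_{\overline{Z}_j}$.

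For the $\dbbar^*$ side, the plan is an integration-by-parts argument. The crucial preliminary observation is that $\mu = \eta^1\wedge\cdots\wedge\eta^k\wedge\Phi^n$ is $\nabla^{LP}$-parallel, since $\nabla^{LP}\eta^i = 0$ and $\nabla^{LP}\Phi = 0$ (the latter following from $\nabla^{LP}\varphi = \nabla^{LP}g = 0$). This yields $\mathrm{div}_\mu(X) = \mathrm{tr}(\nabla^{LP} X) - \kappa(X)$ for every vector field $X$, where $\kappa(X) = \sum_a e^a(T_\nabla(e_a,X))$. I would then expand $(\dbbar\alpha,\beta)$ using the formula for $\dbbar$ just proved, the Hermitian-adjoint relation $(\overline{\theta}^j\wedge\cdot)^\dagger = \iota(\theta^j)$, the Leibniz rule $\nabla_{Z_j}(\iota(\theta^j)\beta) = \iota(\nabla_{Z_j}\theta^j)\beta + \iota(\theta^j)\nabla_{Z_j}\beta$, and Stokes' theorem. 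After collecting terms, the identity $\dbbar^* = -\sum_j \iota(\theta^j)\nabla^\S_{Z_j}$ reduces to showing $\mathrm{div}_\mu(Z_j) = \mathrm{tr}(\nabla^{LP} Z_j)$ for each $j$, i.e., $\kappa(Z_j) = 0$.

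The main obstacle, and decisive step, is this torsion-trace vanishing. Expanding in the frame $\{Z_k,\overline{Z}_k,\xi_i\}$ gives three contributions. The first, $\sum_k \theta^k(T_\nabla(Z_k,Z_j))$, vanishes by \eqref{torsion}. The second, $\sum_k \overline{\theta}^k(T_\nabla(\overline{Z}_k,Z_j))$, vanishes because Lotta-Pastore torsion property 2(a) places $T_\nabla(\overline{Z}_k,Z_j)$ inside $T\otimes\C$. The third, $\sum_i \eta^i(T_\nabla(\xi_i,Z_j))$, vanishes because property 2(b) applied to $\varphi Z_j = iZ_j$ forces $\varphi T_\nabla(\xi_i,Z_j) = -iT_\nabla(\xi_i,Z_j)$, so $T_\nabla(\xi_i,Z_j) \in E_{0,1}$ and $\eta^i$ annihilates it. Combining the reductions then yields $\D = \sqrt{2}(\dbbar + \dbbar^*) = \D_b$.
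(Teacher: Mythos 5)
Your proposal is correct and follows essentially the same route as the paper's proof: expand $\D$ in a local unitary frame as $\sqrt{2}\sum_j(\overline{\theta}^j\wedge\nabla_{\overline{Z}_j}-\iota(\theta^j)\nabla_{Z_j})$, identify the wedge part with $\dbbar$ via the vanishing of the torsion on $E_{0,1}\otimes E_{0,1}$ (Lemma \ref{lem1}), and identify the contraction part with $\dbbar^*$ by integration by parts against $\mu=\eta^1\wedge\cdots\wedge\eta^k\wedge\Phi^n$, with the divergence correction killed by the vanishing of the torsion trace on $E$ (Lemmas \ref{lem2}--\ref{lem4} and Corollaries \ref{divtr}, \ref{barstar}). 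Your three-case verification that $\kappa(Z_j)=0$ is the complex-frame version of the paper's Lemma \ref{lem3}, which runs the same case analysis in the real $\varphi$-basis $\{e_i,f_i,\xi_i\}$ using the same three Lotta--Pastore torsion conditions.
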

The proof of this result follows from a series of lemmas that we will now proceed to prove, before returning to the proof of the main theorem.
\begin{lemma}\label{lem1}
Let $\{\overline{Z}_1,\ldots,\overline{Z}_n\}$ be a local orthonormal frame for $E_{0,1}$ with respect to the Hermitian pairing $\langle\cdot,\cdot\rangle$, and let $\{\overline{\theta}^1,\ldots, \overline{\theta}^n\}$ denote the corresponding coframe.  Then the tangential $\dbbar$-operator can be expressed in terms of the generalized Tanaka-Webster connection $\nabla^{LP}$ by
\begin{equation}\label{eq1}
\dbbar \gamma= \sum_{i=1}^n\overline{\theta}^i\wedge\nabla^{LP}_{\overline{Z}_i}\gamma,
\end{equation}
for any $\gamma\in\mathcal{A}^{0,q}(M,E_{1,0})$.
\end{lemma}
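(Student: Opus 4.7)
The plan is to prove the formula by comparing both sides when evaluated on $(q+1)$-tuples of sections of $E_{0,1}$. Since $\nabla^{LP}$ preserves the decomposition $T_\C M = E_{1,0}\oplus E_{0,1}\oplus (T\otimes \C)$, the form $\nabla^{LP}_{\overline{Z}_i}\gamma$ lies in $\mathcal{A}^{0,q}(M,E_{1,0})$, so the right-hand side of \eqref{eq1} automatically lies in $\mathcal{A}^{0,q+1}(M,E_{1,0})$. Thus both $\dbbar\gamma$ and $\sum_i\overline{\theta}^i\wedge\nabla^{LP}_{\overline{Z}_i}\gamma$ are of bidegree $(0,q+1)$, and a $(0,q+1)$-form is uniquely determined by its values on $(q+1)$-tuples of sections of $E_{0,1}$.

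The main tool is the standard identity, valid for any connection $\nabla$ with torsion $T_\nabla$,
\[
d\gamma(X_0,\ldots,X_q) = \sum_{i=0}^q (-1)^i(\nabla_{X_i}\gamma)(X_0,\ldots,\widehat{X_i},\ldots,X_q) + \sum_{i<j}(-1)^{i+j}\gamma(T_\nabla(X_i,X_j),X_0,\ldots,\widehat{X_i},\ldots,\widehat{X_j},\ldots,X_q).
\]
First I would apply this to $\nabla=\nabla^{LP}$ with $X_0,\ldots,X_q$ taken to be sections $\overline{Y}_0,\ldots,\overline{Y}_q$ of $E_{0,1}$. By property (i) of the torsion of $\nabla^{LP}$, rephrased as \eqref{torsion}, we have $T_\nabla(\overline{Y}_i,\overline{Y}_j)=0$, so the torsion sum vanishes identically, leaving
\[
d\gamma(\overline{Y}_0,\ldots,\overline{Y}_q) = \sum_i (-1)^i(\nabla^{LP}_{\overline{Y}_i}\gamma)(\overline{Y}_0,\ldots,\widehat{\overline{Y}_i},\ldots,\overline{Y}_q).
\]
Since every argument lies in $E_{0,1}$, this value coincides with $(\dbbar\gamma)(\overline{Y}_0,\ldots,\overline{Y}_q)$, by the definition \eqref{dbbar} of $\dbbar$ as the $(0,q+1)$-component of $d\gamma$.

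Next I would expand the right-hand side of \eqref{eq1} using the standard formula for a wedge of a 1-form with a $q$-form:
\[
\Bigl(\sum_k \overline{\theta}^k\wedge\nabla^{LP}_{\overline{Z}_k}\gamma\Bigr)(\overline{Y}_0,\ldots,\overline{Y}_q) = \sum_i(-1)^i\sum_k \overline{\theta}^k(\overline{Y}_i)(\nabla^{LP}_{\overline{Z}_k}\gamma)(\overline{Y}_0,\ldots,\widehat{\overline{Y}_i},\ldots,\overline{Y}_q).
\]
Using $C^\infty$-linearity of the connection in its vector argument and the identity $\overline{Y}_i = \sum_k \overline{\theta}^k(\overline{Y}_i)\overline{Z}_k$, the inner sum collapses to $(\nabla^{LP}_{\overline{Y}_i}\gamma)(\overline{Y}_0,\ldots,\widehat{\overline{Y}_i},\ldots,\overline{Y}_q)$, matching the expression for $\dbbar\gamma$ obtained above. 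Equality on all such tuples forces equality as $(0,q+1)$-forms.

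The main obstacle is really just ensuring that the torsion terms drop out, which is where the specific property \eqref{torsion} of the generalized Tanaka-Webster connection is essential; without it the formula would have extra correction terms. Everything else is a matter of carefully tracking the bigrading and using the torsion-corrected formula for $d$ in a local frame.
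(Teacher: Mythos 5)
Your proof is correct and follows essentially the same route as the paper: the paper's own proof defers the computation to Proposition 1.17 of the cited reference of Dragomir--Tomassini, noting that it rests on exactly the two facts you isolate, namely that $d\gamma$ restricted to $(q+1)$-tuples from $E_{0,1}$ already computes $\dbbar\gamma$, and that the torsion of $\nabla^{LP}$ vanishes on $E_{0,1}\otimes E_{0,1}$ (equation \eqref{torsion}). You have simply written out in full the torsion-corrected exterior-derivative computation that the paper declines to repeat, and it is sound.
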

\begin{proof}
In the case of the Tanaka-Webster connection on a nondegenerate CR manifold of hypersurface type, this result is Proposition 1.17 of \cite{DT}.  Upon inspecting the proof given in \cite{DT}, we see that it relies on two facts: First, that for any $(0,q)$-form $\gamma$, the restriction of $d\gamma$ to $E_{0,1}^{\otimes (q+1)}$ coincides with $\dbbar\gamma$, and second, that the torsion of the Tanaka-Webster connection vanishes on $E_{0,1}\otimes E_{0,1}$.  Since both of these facts remain true for the connection $\nabla^{LP}$ on a CR-integrable almost $\S$-manifold, the proof given in \cite{LP} is equally valid.  The proof is a somewhat lengthy computation, so we do not repeat it here.
\end{proof}
The next lemma can be found in \cite[Appendix 6]{KN1}:
\begin{lemma}\label{lem2}
Let $M$ be an oriented manifold equipped with a volume form $\mu$, and let $\nabla$ be a connection on $M$ such that $\nabla\mu=0$.  Then for any $X\in\Gamma(M,TM)$, the endomorphism $A_X:\Gamma(M,TM)\to \Gamma(M,TM)$ given by $A_X = \mathcal{L}(X)-\nabla_X$ satisfies
\begin{equation}\label{eq2}
\div(X) = -\Tr(A_X),
\end{equation}
where the divergence $\div(X)$ is defined as usual by $\div(X)\mu = \mathcal{L}(X)\mu$.
\end{lemma}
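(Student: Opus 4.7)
The plan is to verify that $A_X$ is in fact a tensor field of type $(1,1)$ (i.e.\ $C^\infty(M)$-linear in its argument), and then to compute the action of $A_X$ on the volume form $\mu$ in two different ways. The first observation is that, using the torsion $T$ of $\nabla$, one may rewrite
\[
A_X(Y) = [X,Y] - \nabla_X Y = -\nabla_Y X - T(X,Y),
\]
which makes manifest that $A_X\in\Gamma(M,\End TM)$.

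Next I would extend $A_X$ to act as a derivation on the full tensor algebra of $M$. Both $\mathcal{L}_X$ and $\nabla_X$ are derivations on tensors that commute with contractions and agree on functions (both equal $X\cdot f$), so their difference $A_X$ is a derivation that vanishes on $C^\infty(M)$ and is $C^\infty(M)$-linear, hence determined by its restriction to $\Gamma(M,TM)$. In particular, its induced action on $T^*M$ is minus the transpose: $(A_X \alpha)(Y) = -\alpha(A_X Y)$, and on an $n$-form $\mu$ (where $n=\dim M$) we obtain
\[
(A_X\mu)(Y_1,\ldots,Y_n) = -\sum_{i=1}^n \mu(Y_1,\ldots,A_X Y_i,\ldots,Y_n).
\]

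The key step, and the one that does the real work, is the standard linear-algebra fact that any endomorphism $B$ of a finite-dimensional vector space $V$ acts on $\Lambda^{\dim V}V^*$ by multiplication by $-\Tr(B)$; applied fibrewise this gives $A_X\mu = -\Tr(A_X)\mu$. I would either cite this or verify it quickly by evaluating on a local frame $\{e_i\}$ with dual frame $\{e^i\}$, using that $\mu = f\, e^1\wedge\cdots\wedge e^n$ and that $\sum_i e^i(Be_i) = \Tr(B)$.

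Finally, since $\nabla\mu = 0$ by hypothesis, the extension of $\nabla_X$ to $\Lambda^n T^*M$ kills $\mu$, so $A_X\mu = \mathcal{L}_X\mu = \div(X)\mu$. Comparing with $A_X\mu = -\Tr(A_X)\mu$ yields the desired identity $\div(X) = -\Tr(A_X)$. The only mild subtlety is checking that the extensions of $\mathcal{L}_X$ and $\nabla_X$ to tensor bundles are compatible (both are derivations commuting with contractions and agreeing on $C^\infty(M)$), so that $A_X$ really does act on $\mu$ as the $(1,1)$-tensor field it is; everything else is a direct computation.
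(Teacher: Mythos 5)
Your proof is correct and complete: the identification of $A_X$ as a $(1,1)$-tensor via the torsion, its derivation extension to the tensor algebra, the fibrewise fact that the induced action on a top-degree form is multiplication by $-\Tr(A_X)$, and the comparison with $\mathcal{L}(X)\mu$ using $\nabla\mu=0$ are exactly the right steps. The paper does not prove this lemma itself but cites Kobayashi--Nomizu (Appendix 6), and your argument is essentially the standard one given there.
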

\begin{lemma}\label{lem3}
Let $(M,\phi,\xi_i,\eta^j)$ be a CR-integrable almost $\S$-manifold, equipped with the generalized Tanaka-Webster connection $\nabla^{LP}$ and the volume form $\mu$ given by \eqref{mu}.  Then for any $X\in\Gamma(M,E)$, the endomorphism $\nabla^{LP} X$ given by $\nabla^{LP} X(Y) = \nabla^{LP}_Y X$ satisfies
\begin{equation}\label{eq3}
\Tr(A_X) = -\Tr(\nabla X).
\end{equation}
\end{lemma}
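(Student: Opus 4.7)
The plan is to reduce the identity to a computation involving the torsion of $\nabla^{LP}$, and then to exploit the specific form of that torsion given in the theorem of Lotta and Pastore quoted above.

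First, I would unpack the definition of $A_X$ as an endomorphism of $TM$. For any $Y \in \Gamma(M,TM)$,
\[
A_X(Y) = [X,Y] - \nabla^{LP}_X Y = -\nabla^{LP}_Y X - T_\nabla(X,Y),
\]
using the definition $T_\nabla(X,Y) = \nabla_X Y - \nabla_Y X - [X,Y]$. Thus, as endomorphisms of $TM$,
\[
A_X = -\nabla^{LP} X - T_\nabla(X,\cdot),
\]
and taking traces gives $\Tr(A_X) = -\Tr(\nabla^{LP} X) - \Tr\bigl(Y \mapsto T_\nabla(X,Y)\bigr)$. So the lemma reduces to showing that the second term vanishes whenever $X \in \Gamma(M,E)$.

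To compute $\Tr(Y \mapsto T_\nabla(X,Y))$, I would choose a local frame adapted to the splitting $TM = E \oplus T$, say $\{e_1,\dots,e_{2n},\xi_1,\dots,\xi_k\}$, with dual coframe $\{e^\alpha,\eta^i\}$ satisfying $e^\alpha(\xi_i) = 0$ and $\eta^i(e_\alpha) = 0$. Then
\[
\Tr\bigl(Y \mapsto T_\nabla(X,Y)\bigr) = \sum_\alpha e^\alpha\bigl(T_\nabla(X,e_\alpha)\bigr) + \sum_i \eta^i\bigl(T_\nabla(X,\xi_i)\bigr).
\]
For the first sum, property (i) of the torsion gives $T_\nabla(X,e_\alpha) = -2\Phi(X,e_\alpha)\sum_j \xi_j \in \Gamma(M,T)$, and since $e^\alpha$ annihilates $T$ each term is zero. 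For the second sum I use property (ii): for $X \in \Gamma(M,E)$, the element $\varphi(-\varphi X) = X$ is in the image of $\varphi|_E$, which is a bijection of $E$, so writing $X = \varphi X'$ with $X' \in \Gamma(M,E)$ we get
\[
T_\nabla(\xi_i,X) = T_\nabla(\xi_i,\varphi X') = -\varphi\, T_\nabla(\xi_i,X') \in \Gamma(M,E),
\]
because $\varphi$ takes values in $E$. Hence $T_\nabla(X,\xi_i) = -T_\nabla(\xi_i,X)$ has no $T$-component, and $\eta^i\bigl(T_\nabla(X,\xi_i)\bigr) = 0$.

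Putting these two vanishings together yields $\Tr\bigl(Y \mapsto T_\nabla(X,Y)\bigr) = 0$ for $X \in \Gamma(M,E)$, which establishes \eqref{eq3}. There is no real obstacle in this argument; the work is entirely algebraic, and the only care required is to correctly invoke property (ii) — the point being that $\varphi|_E$ is invertible, so the constraint on $T_\nabla(\xi_i,\varphi X)$ propagates to a constraint on $T_\nabla(\xi_i,Y)$ for every $Y \in \Gamma(M,E)$.
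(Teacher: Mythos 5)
Your proof is correct and follows essentially the same route as the paper's: both start from $A_X = -\nabla^{LP}X - T_\nabla(X,\cdot)$ and kill the trace of the torsion term using property (i) (values in $T=\ker\varphi$) for the $E$-directions and property (ii) (values in $E$, which you deduce from the invertibility of $\varphi|_E$ where the paper uses the explicit formula $T_\nabla(X,\xi_i)=\varphi h_i(X)$) for the $T$-directions. Your computation of the trace via a dual coframe in place of the paper's volume-form formula \eqref{trace} is an equivalent, and arguably cleaner, presentation of the same argument.
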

\begin{proof}
We first note that since $\nabla^{LP}\eta^i = \nabla^{LP}g = \nabla^{LP}\phi = 0$, we have $\nabla^{LP}\mu=0$ as well, and thus Lemma \ref{lem2} applies.  For a torsion-free connection, the lemma follows immediately from the identity $A_X(Y)= -\nabla X(Y)-T_\nabla(X,Y)$.  The connection $\nabla^{LP}$, of course, is not torsion-free.  However, we recall that the torsion $\nabla^{LP}$ is explicitly specified by the conditions
\begin{enumerate}[(i)]
\item $T_\nabla(X,Y) = 2\Phi(X,Y)\overline{\xi}$, for all $X,Y\in\Gamma(M,E)$,
\item $T_\nabla(X,\xi_i) = \phi h_i(X)$, for all $i=1,\ldots,k$ and all $X\in \Gamma(M,TM)$,
\item $T_\nabla(\xi_i,\xi_j) = 0$, for all $i,j\in\{1,\ldots, k\}$,
\end{enumerate}
where $\overline{\xi} = \sum_{i=1}^k \xi_i$ and $h_i(X) = (\mathcal{L}(\xi_i)\phi)(X)$. (Condition (ii) above implies the corresponding condition given earlier in the definition of $\nabla^{LP}$.) From \cite{LP}, we know that each operator $h_i$ vanishes on $T=\ker\phi$ and takes values in $\Gamma(M,E)$.  With respect to any local frame $\{X_1,\ldots X_{2n+k}\}$ for $TM$, $\Tr(A_X)$ is given by
\begin{equation}\label{trace}
\Tr(A_X)\mu(X_1,\ldots,X_{2n+k}) = \sum_{i=1}^{2n+k}\mu(X_1,\ldots,A_X(X_i),\ldots,X_{2n+k}).
\end{equation}
For convenience, we choose an orthonormal $\phi$-basis $\{\xi_1,\ldots,\xi_k,e_1,f_1,\ldots,e_n,f_n\}$.  (Recall that the $e_i$ and $f_i$ are a local frame for $E$, and satisfy $f_i = \phi e_i$.) Now, we know that $\mu = \eta^1\wedge\cdots\wedge\eta^k\wedge\Phi^n$, and that $\eta^i(\xi_j)=\delta^i_j$, while $\eta^i(e_j)=\eta^i(f_j)=\iota(\xi_i)\Phi = 0$.  Thus, the only non-zero contributions to the right-hand side of \eqref{trace} must involve $A_X$ in one of three possible ways:
\begin{enumerate}
\item $\eta^i(A_X(\xi_i))$: In this case, we have
\[
A_X(\xi_i) = -\nabla^{LP} X(\xi_i) - T_\nabla(X,\xi_i) = -\nabla^{LP} X(\xi_i)-h_i(X).
\]
But since $h_i$ takes values in $\Gamma(M,E)$, we have $\eta^i(A_X(\xi_i)) = -\eta^i(\nabla^{LP} X(\xi_i))$.
\item $\Phi(A_X(e_i),\cdot)$: Since $X\in\Gamma(M,E)$ and $e_i$ is part of a local frame for $E$, we have
\[
A_X(e_i) = -\nabla^{LP} X(e_i) - T_\nabla(X,e_i) = -\nabla^{LP} X(e_i) -2\Phi(X,e_i)\overline{\xi},
\]
and since $\iota(\xi_i)\Phi = 0$ for $i=1,\ldots, k$, we get $\Phi(A_X(e_i),\cdot) = -\Phi(\nabla^{LP} X(e_i),\cdot)$.
\item $\Phi(A_X(f_i),\cdot)$: In this case we obtain $\Phi(A_X(f_i),\cdot) = -\Phi(\nabla^{LP} X(f_i),\cdot)$ using the same argument as in the previous case.
\end{enumerate}
Thus, we see that for each possibility we may replace $A_X$ on the right-hand side of \eqref{trace} by $-\nabla^{LP} X$, and the result follows.
\end{proof}
\begin{corollary}\label{divtr}
For any $X\in\Gamma(M,E)$, we have $\div(X) = \Tr(\nabla^{LP} X)$, and thus, 
\[
\int_M \Tr(\nabla^{LP} X)\mu = 0.
\]
\end{corollary}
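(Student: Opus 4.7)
The proof is essentially a one-line assembly of the two preceding lemmas, followed by a standard application of Stokes' theorem, so I would present it succinctly.

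First I would verify the pointwise identity. Lemma \ref{lem2} applies because $\nabla^{LP}\mu = 0$, which was already established in the proof of Lemma \ref{lem3} (this follows from $\nabla^{LP}\eta^i = \nabla^{LP}\Phi = 0$, since $\mu = \eta^1\wedge\cdots\wedge\eta^k\wedge\Phi^n$). Thus $\div(X) = -\Tr(A_X)$ for any $X\in\Gamma(M,TM)$. Specializing to $X\in\Gamma(M,E)$ and invoking Lemma \ref{lem3}, we obtain
\[
\div(X) = -\Tr(A_X) = \Tr(\nabla^{LP}X),
\]
which is the first assertion.

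For the integral identity, I would use the standard argument that on a compact oriented manifold without boundary, the integral of a divergence vanishes. By definition, $\div(X)\mu = \mathcal{L}(X)\mu$, and since $\mu$ is a top-degree form we have $d\mu = 0$, so Cartan's magic formula gives
\[
\mathcal{L}(X)\mu = d\iota(X)\mu + \iota(X)d\mu = d(\iota(X)\mu).
\]
Stokes' theorem then yields $\int_M \div(X)\mu = \int_M d(\iota(X)\mu) = 0$. Combining this with the pointwise identity completes the proof.

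There is no real obstacle here; the substantive work was done in Lemma \ref{lem3}, where the torsion terms of $\nabla^{LP}$ had to be shown not to contribute to $\Tr(A_X)$ for $X\in\Gamma(M,E)$. The corollary is simply the integrated form of that lemma, and the only thing worth flagging is that the hypothesis $X\in\Gamma(M,E)$ is essential: for general $X\in\Gamma(M,TM)$, the torsion component $T_\nabla(X,\xi_i) = \phi h_i(X)$ need not have vanishing trace against the volume form in the way that the $E$-valued torsion does in the proof of Lemma \ref{lem3}.
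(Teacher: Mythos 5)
Your proof is correct and is precisely the argument the paper intends: the corollary is stated without proof because it follows immediately by combining Lemma \ref{lem2} with Lemma \ref{lem3}, and the vanishing of $\int_M \div(X)\,\mu$ is the standard Stokes argument on a compact manifold without boundary. Your observation that the hypothesis $X\in\Gamma(M,E)$ is what makes Lemma \ref{lem3} applicable is also accurate.
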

\begin{lemma}\label{lem4}
For any $\alpha\in\mathcal{A}^{0,1}(M,E_{1,0})$, let $X\in\Gamma(M,E_{1,0})$ be the vector field dual to $\alpha$ with respect to the metric $g$; that is, $g(X,Y) = \alpha(Y)$. Let $\{e_i,f_i,\xi_j\}$ be a local $\phi$-basis for $TM$. Then with respect to the local orthonormal frame $\{Z_i\}$ for $E_{1,0}$ given by $Z_j = \frac{1}{\sqrt{2}}(e_j-if_j)$, we have
\begin{equation}\label{eq4}
\Tr(\nabla^{LP} X) = \sum_{i=1}^n\left(Z_i\cdot\alpha(\overline{Z}_i)-\alpha(\nabla^{LP}_{Z_i}\overline{Z}_i)\right).
\end{equation}
\end{lemma}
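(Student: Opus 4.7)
The plan is to compute $\Tr(\nabla^{LP} X)$ directly in the complexified frame $\{Z_1,\dots,Z_n,\overline{Z}_1,\dots,\overline{Z}_n,\xi_1,\dots,\xi_k\}$, with dual coframe $\{\theta^1,\dots,\theta^n,\overline{\theta}^1,\dots,\overline{\theta}^n,\eta^1,\dots,\eta^k\}$, so that
\[
\Tr(\nabla^{LP}X) = \sum_i \theta^i(\nabla^{LP}_{Z_i}X) + \sum_i \overline{\theta}^i(\nabla^{LP}_{\overline{Z}_i}X) + \sum_j \eta^j(\nabla^{LP}_{\xi_j}X).
\]
The first key observation is that because $\nabla^{LP}\varphi = 0$ and $\nabla^{LP}\eta^j = 0$, the connection preserves the decomposition $T_\C M = E_{1,0}\oplus E_{0,1}\oplus (T\otimes\C)$. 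Since $X\in\Gamma(M,E_{1,0})$, it follows that $\nabla^{LP}_Y X\in\Gamma(M,E_{1,0})$ for every $Y$, so both $\overline{\theta}^i(\nabla^{LP}_{\overline{Z}_i}X)$ and $\eta^j(\nabla^{LP}_{\xi_j}X)$ vanish. This reduces the trace to
\[
\Tr(\nabla^{LP}X) = \sum_{i=1}^n \theta^i(\nabla^{LP}_{Z_i}X).
\]

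The second step is to reinterpret $\theta^i$ in terms of the metric. A direct computation from $Z_j = \tfrac{1}{\sqrt{2}}(e_j - if_j)$ shows that $g(Z_i,Z_j)=0$, $g(Z_i,\overline{Z}_j)=\delta_{ij}$, and $g(Z_i,\xi_j)=0$; in other words, the $\C$-bilinear extension of $g$ identifies $\theta^i$ with $g(\cdot,\overline{Z}_i)$ on all of $T_\C M$. Writing $\nabla^{LP}_{Z_i}X = \sum_k b^k_i Z_k$ gives
\[
\theta^i(\nabla^{LP}_{Z_i}X) = b^i_i = g(\nabla^{LP}_{Z_i}X,\overline{Z}_i).
\]

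For the final step I would use the metric compatibility $\nabla^{LP} g = 0$ and the definition $\alpha = g(X,\cdot)$. The Leibniz rule for the tensor $\alpha$ gives $\nabla^{LP}_Y\alpha = g(\nabla^{LP}_Y X,\cdot)$, and therefore
\[
g(\nabla^{LP}_{Z_i}X,\overline{Z}_i) = (\nabla^{LP}_{Z_i}\alpha)(\overline{Z}_i) = Z_i\cdot\alpha(\overline{Z}_i) - \alpha(\nabla^{LP}_{Z_i}\overline{Z}_i).
\]
Summing over $i$ yields the stated formula. There is no serious obstacle here; the only thing to be careful about is keeping track of the complex linear-algebra conventions (isotropy of $E_{1,0}$ with respect to the $\C$-bilinear extension of $g$, and the pairing $g(Z_i,\overline{Z}_j)=\delta_{ij}$), which is what makes $\overline{Z}_i$ the metric dual of $\theta^i$ and lets steps two and three proceed without extra terms.
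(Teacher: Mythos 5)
Your argument is correct, and it rests on the same three ingredients as the paper's proof: $\nabla^{LP}$ preserves the splitting $T_\C M = E_{1,0}\oplus E_{0,1}\oplus(T\otimes\C)$, metric compatibility gives $(\nabla^{LP}_{Y_1}\alpha)(Y_2)=g(\nabla^{LP}_{Y_1}X,Y_2)$, and $\alpha$ only sees the $(0,1)$-part of its argument. The difference is purely in the bookkeeping of the trace. The paper computes $\Tr(\nabla^{LP}X)$ in the real orthonormal frame $\{e_i,f_i,\xi_j\}$ as $\sum_i\bigl(g(\nabla^{LP}_{e_i}X,e_i)+g(\nabla^{LP}_{f_i}X,f_i)\bigr)$, converts to $\sum_i\bigl(e_i\cdot\alpha(e_i)+f_i\cdot\alpha(f_i)-\alpha(\nabla^{LP}_{e_i}e_i+\nabla^{LP}_{f_i}f_i)\bigr)$, and only at the end extracts the $(0,1)$-components of $e_i$, $f_i$, $\nabla^{LP}_{e_i}e_i$ and $\nabla^{LP}_{f_i}f_i$, relying on a cancellation of the $\nabla^{LP}_{\overline{Z}_i}\overline{Z}_i$ terms to land on \eqref{eq4}. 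You instead diagonalize from the start: computing the trace in the frame $\{Z_i,\overline{Z}_i,\xi_j\}$, the fact that $\nabla^{LP}_YX\in\Gamma(M,E_{1,0})$ for all $Y$ kills the $\overline{\theta}^i$ and $\eta^j$ contributions immediately, and the identification $\theta^i=g(\cdot,\overline{Z}_i)$ (which you correctly justify via $g(Z_i,Z_j)=0$, $g(Z_i,\overline{Z}_j)=\delta_{ij}$, $g(Z_i,\xi_j)=0$) reduces everything to the single sum $\sum_i g(\nabla^{LP}_{Z_i}X,\overline{Z}_i)$. This buys a shorter computation with no cancellation to track; the paper's route has the minor advantage of starting from the same real-frame expression for the trace that it already set up in Lemma \ref{lem3}, so the two lemmas read uniformly. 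Either way the proof is complete as you have written it.
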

\begin{proof}
For the local frame $\{Z_i\}$ defined as above, we have $e_i = \frac{1}{\sqrt{2}}(Z_i+\overline{Z}_i)$ and $f_i = \frac{i}{\sqrt{2}}(Z_i-\overline{Z}_i)$.  We can express $\Tr(\nabla^{LP} X)$ using the metric $g$ as
\[
\Tr(\nabla^{LP} X) = \sum_{i=1}^n(g(\nabla^{LP}_{e_i}X,e_i)+g(\nabla^{LP}_{f_i}X,f_i)) + \sum_{i=1}^kg(\nabla^{LP}_{\xi_i}X,\xi_i).
\]
However, $X\in\Gamma(M,E)$ and $\nabla^{LP}$ preserves $E$, and so $g(\nabla^{LP}_{\xi_i}X,\xi_i)=0$, due to the fact that $E$ and $T$ are orthogonal with respect to $g$.  Next, we note that for any $Y_1,Y_2\in\Gamma(M,TM)$, we have
\begin{align*}
(\nabla^{LP}_{Y_1}\alpha)(Y_2) &= Y_1\cdot (\alpha(Y_2))-\alpha(\nabla^{LP}_{Y_1}Y_2)\\
	&= Y_1\cdot g(X,Y_2)-g(X,\nabla^{LP}_{Y_1}Y_2)\\
	&= g(\nabla_{Y_1}X,Y_2).
\end{align*}
Thus we can write $\Tr(\nabla^{LP}X)$ as follows:
\begin{align*}
\Tr(\nabla^{LP}X) &= \sum_{i=1}^n\left(g(\nabla^{LP}_{e_i}X,e_i)+g(\nabla^{LP}_{f_i}X,f_i)\right)\\
	&= \sum_{i=1}^n\left((\nabla_{e_i}\alpha)(e_i)+(\nabla_{f_i}\alpha)(f_i)\right)\\
	&= \sum_{i=1}^n\left(e_i\cdot\alpha(e_i)+f_i\cdot\alpha(f_i)-\alpha(\nabla_{e_i}e_i+\nabla_{f_i}f_i)\right).
\end{align*}
Now, since $\alpha$ is a $(0,1)$-form, we have $\alpha(X) = \alpha(X^{0,1})$.  We have $e_i^{0,1} = \frac{1}{\sqrt{2}}\overline{Z}_i$ and $f_i^{0,1} = -\frac{i}{\sqrt{2}}\overline{Z}_i$, and since $\nabla$ preserves $E_{1,0}$ and $E_{0,1}$, we find that
\[
(\nabla^{LP}_{e_i}e_i)^{0,1} = \frac{1}{2}\left(\nabla^{LP}_{Z_i}\overline{Z}_i + \nabla^{LP}_{\overline{Z}_i}\overline{Z}_i\right),
\]
and
\[
(\nabla^{LP}_{f_i}f_i)^{0,1} = \frac{1}{2}\left(\nabla^{LP}_{Z_i}\overline{Z}_i - \nabla^{LP}_{\overline{Z}_i}\overline{Z}_i\right).
\]
Substituting this into the above expression for $\Tr(\nabla^{LP}X)$, we obtain our result.
\end{proof}
\begin{corollary}\label{barstar}
The formal adjoint $\dbbar^*:\mathcal{A}^{0,q}(M,E_{1,0})\to \mathcal{A}^{0,q-1}(M,E_{1,0})$ is given locally in terms of the connection $\nabla^{LP}$ by
\[
\dbbar^*\gamma = -\sum_{i=1}^n\iota(\overline{Z}_i)\nabla_{Z_i}\gamma.
\]
\end{corollary}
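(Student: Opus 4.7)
The plan is to compute $(\psi,\dbbar\zeta)$ by starting from Lemma~\ref{lem1}, transferring both the wedge with $\overline{\theta}^i$ and the covariant derivative $\nabla^{LP}_{\overline{Z}_i}$ across the Hermitian inner product to act on $\psi$, and showing that the extra terms produced along the way integrate to zero by Corollary~\ref{divtr}.

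Two pointwise identities do most of the work. First, because $\{\overline{\theta}^i\}$ is orthonormal and because $\langle\cdot,\cdot\rangle$ is conjugate-linear in the first entry, the adjoint of $\overline{\theta}^i\wedge(\,\cdot\,)$ is $\iota(\overline{Z}_i)$, so Lemma~\ref{lem1} converts $\langle\psi,\dbbar\zeta\rangle$ into $\sum_i\langle\iota(\overline{Z}_i)\psi,\nabla^{LP}_{\overline{Z}_i}\zeta\rangle$. Second, since $\nabla^{LP}g=0$ and $\nabla^{LP}$ commutes with complex conjugation, one has
\[
\overline{Z}_i\cdot\langle\alpha,\beta\rangle = \langle\nabla^{LP}_{Z_i}\alpha,\beta\rangle + \langle\alpha,\nabla^{LP}_{\overline{Z}_i}\beta\rangle,
\]
the swap $\overline{Z}_i\mapsto Z_i$ on the first slot coming precisely from the conjugation convention. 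Applying this together with the Leibniz rule $\nabla^{LP}_{Z_i}\iota(\overline{Z}_i)\psi = \iota(\nabla^{LP}_{Z_i}\overline{Z}_i)\psi + \iota(\overline{Z}_i)\nabla^{LP}_{Z_i}\psi$ breaks $(\psi,\dbbar\zeta)$ into the desired principal contribution $\bigl(-\sum_i\iota(\overline{Z}_i)\nabla^{LP}_{Z_i}\psi,\zeta\bigr)$, a ``total derivative'' term $\int_M\sum_i\overline{Z}_i\cdot\langle\iota(\overline{Z}_i)\psi,\zeta\rangle\,\mu$, and an error term $-\int_M\sum_i\langle\iota(\nabla^{LP}_{Z_i}\overline{Z}_i)\psi,\zeta\rangle\,\mu$.

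The hard step, and the only real obstacle, is to show these two remaining integrals cancel. For this I would apply Lemma~\ref{lem4} to the vector field $X=\sum_i\overline{h_i}\,Z_i\in\Gamma(M,E_{1,0})$, where $h_i=\langle\iota(\overline{Z}_i)\psi,\zeta\rangle$; its dual $(0,1)$-form $\alpha$ then satisfies $\alpha(\overline{Z}_i)=\overline{h_i}$, and the lemma expresses $\Tr(\nabla^{LP}X)$ as $\sum_i\bigl(Z_i\cdot\overline{h_i}-\alpha(\nabla^{LP}_{Z_i}\overline{Z}_i)\bigr)$. Integrating over $M$ and invoking Corollary~\ref{divtr}, then taking complex conjugates and re-expanding $\iota(\nabla^{LP}_{Z_i}\overline{Z}_i)\psi$ in the frame $\{\overline{Z}_j\}$ (where the conjugate-linearity of the first slot of $\langle\cdot,\cdot\rangle$ produces the matching complex conjugates) gives precisely the equality of the two unwanted integrals. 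The fiddly part is the bookkeeping with complex conjugations: one must choose $X$ with coefficients $\overline{h_i}$ rather than $h_i$ so that after conjugating the identity from Lemma~\ref{lem4} everything lines up with the formulas produced by the adjoint of $\overline{\theta}^i\wedge$.
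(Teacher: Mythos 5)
Your proposal is correct and follows essentially the same route as the paper: the paper likewise combines Lemma \ref{lem1}, the pointwise adjunction between $\overline{\theta}^i\wedge$ and $\iota(\overline{Z}_i)$, metric compatibility of $\nabla^{LP}$, and an application of Lemma \ref{lem4} together with Corollary \ref{divtr} to the very same auxiliary vector field $X$ (whose dual $(0,1)$-form satisfies $\alpha(\overline{Z}_i)=\overline{h_i}$). The only difference is cosmetic: the paper organizes the computation as $(\dbbar\beta,\gamma)$ with the lower-degree form in the conjugate-linear slot, whereas you compute the complex-conjugate quantity $(\psi,\dbbar\zeta)$, which is why your bookkeeping forces the conjugated coefficients $\overline{h_i}$ that the paper obtains automatically.
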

\begin{proof}
Let $\beta\in\mathcal{A}^{0,q}(M,E_{1,0})$ and let $\gamma\in\mathcal{A}^{0,q+1}(M,E_{1,0})$.  Define a 1-form $\alpha\in\mathcal{A}^{0,1}(M,E_{1,0})$ by
\[
\alpha(Y) = \langle\beta,\iota(Y^{0,1})\gamma\rangle,
\]
and let $X\in\Gamma(M,E_{1,0})$ be the dual vector field as in Lemma \ref{lem4}.  Using the result of Lemma \ref{lem4}, we have
\[
\Tr(\nabla^{LP}X) = \sum_{i=1}^n Z_i\cdot\alpha(\overline{Z}_i)-\alpha(\nabla^{LP}_{Z_i}\overline{Z}_i),
\]
and for each $i=1,\ldots, k$, we have
\begin{align*}
Z_i\cdot \alpha(\overline{Z}_i) & = Z_i\cdot \langle \beta,\iota(\overline{Z}_i)\gamma\rangle\\
	&= \langle\nabla^{LP}_{\overline{Z}_i}\beta,\iota(\overline{Z}_i)\gamma\rangle + \langle\beta,\nabla^{LP}_{Z_i}(\iota(\overline{Z}_i)\gamma)\rangle\\
	&= \langle\nabla^{LP}_{\overline{Z}_i}\beta,\iota(\overline{Z}_i)\gamma\rangle + \langle\beta,\iota(\overline{Z}_i)\nabla^{LP}_{Z_i}\gamma\rangle + \alpha(\nabla^{LP}_{Z_i}\overline{Z}_i).
\end{align*}
Since integration over $M$ with respect to the volume form $\mu$ kills $\Tr(\nabla^{LP}X)$ by Corollary \ref{divtr}, we obtain
\begin{equation*}
(\dbbar\beta,\gamma) = \sum_{i=1}^n(\overline{\theta}^i\wedge\nabla^{LP}_{\overline{Z}_i}\beta,\gamma)
	= \sum_{i=1}^n (\nabla^{LP}_{\overline{Z}_i}\beta,\iota(\overline{Z}_i)\gamma)
	= -\sum_{i=1}^n (\beta,\iota(\overline{Z}_i)\nabla^{LP}_{Z_i}\gamma).\qedhere
\end{equation*}
\end{proof}
\begin{proof}[Proof of Theorem \ref{main1}]
Recall that the operator $\D$ is given in this case by the composition $D = \mathbf{c}\circ\pi_{E^*}\circ\nabla^{LP}$, where $\pi_{E^*}:T^*M\to E^*$, and $\mathbf{c}$ denotes the Clifford multiplication.  If we choose a local basis $\{e_i,f_i,\xi_i\}$ adapted to $\phi$, where $f_i=\phi e_i$ and $\phi\xi_i=0$, then we have the local expression
\[
\D = \sum_{i=1}^n(\mathbf{c}(e^i)\nabla^{LP}_{e_i}+\mathbf{c}(f^i)\nabla^{LP}_{f_i})
\]
for $\D$, where $e^i, f^i$ denote the dual forms to $e_i,f_i$, respectively.  In terms of the corresponding basis vectors $Z_j = \frac{1}{\sqrt{2}}(e_j-if_j)$ for $E_{1,0}$ (with $\overline{Z}_j = \frac{1}{\sqrt{2}}(e_j+if_j)$, $\theta^j = \frac{1}{\sqrt{2}}(e^j+if^j)$, and $\overline{\theta}^j = \frac{1}{\sqrt{2}}(e^j-if^j)$ denoting the corresponding basis elements for $E_{0,1}$, $E_{1,0}^*$, and $E_{0,1}^*$), we see that for any $(0,q)$-form $\gamma$, 
\[
\D\gamma = \sqrt{2}\sum_{i=1}^n(\overline{\theta}^i\wedge\nabla^{LP}_{\overline{Z}_i}\gamma-\iota(\theta^i)\nabla^{LP}_{Z_i}\gamma).
\]
Since $\nabla^{LP}$ preserves $\phi$, and hence the splitting $E\otimes\C = E_{1,0}\oplus E_{0,1}$, $\nabla^{LP}_{Z_i}\gamma$ is again a $(0,q)$-form, and thus contraction with $\theta^i$ using the metric $g$ has the same effect as contraction with the vector field $\overline{Z}_i$ in the usual sense, and the result thus follows from Lemma \ref{lem1} and Corollary \ref{barstar}.
\end{proof}
Now let us suppose that $(\V,\overline{\partial}_\V)\to M$ is a CR-holomorphic vector bundle (Definition \ref{CRhol}), equipped with a Hermitian form $h$.  We suppose that $\V$ is equipped with a connection $\nabla^\V$ that is {\em Hermitian} in the sense that, for all $Z\in\Gamma(M,E_{1,0})$, and all $s\in\Gamma(M,\V)$, we have
\begin{equation}\label{hermconn}
\nabla^\V h=0\quad \text{and}\quad \nabla^\V_{\overline{Z}}s = \overline{Z} s := \iota(\overline{Z})\overline{\partial}_\V s.
\end{equation}
\begin{remark}
In the case of a nondegenerate CR manifold of hypersurface type, a ``Hermitian'' connection as described above is unique up to a certain trace condition on its curvature (see \cite{Ura} or \cite[Chapter 8]{DT}).  The case where this trace is zero was introduced in \cite{Tanaka} and is known as Tanaka's canonical connection.
\end{remark}
Given $(\V,h,\nabla^\V)$ as above, we can define the operator $\D_\V$ on sections of $\S\otimes\V$ using the tensor product connection $\nabla=\nabla^{LP}\otimes\Id_\V+\Id_\S\otimes\nabla^\V$, and we have
\begin{theorem}\label{opeqth}
 Let $\overline{\partial}_\V$ continue to denote the extended operator on $\Gamma(M,\S\otimes\V)$ given by \eqref{extend}. We then have the equality of differential operators
\begin{equation}\label{opeq}
 \D_\V = \sqrt{2}\left(\overline{\partial}_\V +\overline{\partial}_\V^*\right):\Gamma(M,\S^+\otimes\V)\to\Gamma(M,\S^-\otimes\V).
\end{equation}
\end{theorem}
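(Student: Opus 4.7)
The strategy is to mirror the proof of Theorem \ref{main1}: establish twisted versions of Lemma \ref{lem1} and Corollary \ref{barstar}, giving local expressions for $\overline{\partial}_\V$ and $\overline{\partial}_\V^*$ in terms of the tensor product connection $\nabla = \nabla^{LP}\otimes\Id_\V + \Id_\S\otimes\nabla^\V$, and then read off the equality from the local formula for $\D_\V = \mathbf{c}\circ\pi_{E^*}\circ\nabla$.

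For the first step, let $\{\overline{Z}_i\}$, $\{\overline{\theta}^i\}$ be dual local orthonormal frames for $E_{0,1}$, $E_{0,1}^*$ as in Lemma \ref{lem1}. The Hermitian condition $\nabla^\V_{\overline{Z}} s = \iota(\overline{Z})\overline{\partial}_\V s$ in \eqref{hermconn} gives the identification $\overline{\partial}_\V s = \sum_i \overline{\theta}^i\otimes \nabla^\V_{\overline{Z}_i}s$ for any $s\in\Gamma(M,\V)$. Writing an element of $\A^{0,q}(M,\V)$ locally as $\alpha\otimes s$, applying \eqref{extend}, using Lemma \ref{lem1} on $\dbbar\alpha$, and handling the sign via $(-1)^{|\alpha|}\alpha\wedge\overline{\theta}^i = \overline{\theta}^i\wedge\alpha$, I obtain
\[
\overline{\partial}_\V(\alpha\otimes s) = \sum_i \overline{\theta}^i\wedge \nabla_{\overline{Z}_i}(\alpha\otimes s),
\]
which extends by linearity to give $\overline{\partial}_\V = \sum_i \overline{\theta}^i\wedge\nabla_{\overline{Z}_i}$ on all of $\A^{0,q}(M,\V)$.

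For the twisted analogue of Corollary \ref{barstar}, the plan is to repeat the argument leading to it, working with the pairing $\langle\cdot,\cdot\rangle_{\S\otimes\V}$ induced by $g$ and $h$. Given $\beta\in\A^{0,q}(M,\V)$ and $\gamma\in\A^{0,q+1}(M,\V)$, define the $(0,1)$-form $\alpha$ by $\alpha(Y) = \langle\beta,\iota(Y^{0,1})\gamma\rangle_{\S\otimes\V}$ and let $X\in\Gamma(M,E_{1,0})$ be its $g$-dual. The key point is that because $\nabla^\V h = 0$ (so in particular the tensor product connection $\nabla$ preserves $\langle\cdot,\cdot\rangle_{\S\otimes\V}$), differentiating $\alpha(\overline{Z}_i) = \langle\beta,\iota(\overline{Z}_i)\gamma\rangle_{\S\otimes\V}$ produces exactly the two adjoint terms $\langle\nabla_{\overline{Z}_i}\beta,\iota(\overline{Z}_i)\gamma\rangle$ and $\langle\beta,\iota(\overline{Z}_i)\nabla_{Z_i}\gamma\rangle$ (together with the $\alpha(\nabla^{LP}_{Z_i}\overline{Z}_i)$ term that is absorbed into $\Tr(\nabla^{LP}X)$). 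Integration against $\mu$ kills $\Tr(\nabla^{LP}X)$ by Corollary \ref{divtr}, which applies because $X$ still lies in $\Gamma(M,E)$ and the corollary makes no reference to the twisting bundle. Combining with the twisted version of Lemma \ref{lem1} from the previous paragraph yields
\[
\overline{\partial}_\V^*\gamma = -\sum_{i=1}^n \iota(\overline{Z}_i)\nabla_{Z_i}\gamma.
\]

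With the twisted versions of both lemmas in hand, the proof of Theorem \ref{main1} applies verbatim: writing $\D_\V$ locally as
\[
\D_\V\zeta = \sqrt{2}\sum_{i=1}^n\left(\overline{\theta}^i\wedge\nabla_{\overline{Z}_i}\zeta - \iota(\theta^i)\nabla_{Z_i}\zeta\right)
\]
for $\zeta\in\Gamma(M,\S\otimes\V)$, and using that $\nabla$ preserves the bidegree (since $\nabla^{LP}$ preserves $E_{1,0}\oplus E_{0,1}$), contraction by $\theta^i$ via $g$ agrees with contraction by $\overline{Z}_i$ on the $\V$-valued $(0,q)$-form $\nabla_{Z_i}\zeta$, yielding precisely $\sqrt{2}(\overline{\partial}_\V + \overline{\partial}_\V^*)\zeta$. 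The main obstacle is the adjoint formula: one must be confident that the Hermitian condition $\nabla^\V h = 0$ (as opposed to any stronger curvature condition such as the Tanaka trace condition) is sufficient for the divergence-based integration by parts to carry through unchanged, and that no additional torsion-like terms from $\nabla^\V$ intrude — which is exactly what the defining condition \eqref{hermconn} guarantees.
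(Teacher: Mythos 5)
Your proposal is correct and follows essentially the same route as the paper: the paper likewise first shows that $\overline{\partial}_\V$ coincides with the operator $\sum_i\overline{\theta}^i\wedge\nabla_{\overline{Z}_i}$ built from the tensor product connection (using the Hermitian condition \eqref{hermconn}), then reruns the argument of Corollary \ref{barstar} with the $h$-twisted pairing and Corollary \ref{divtr} to get $\overline{\partial}_\V^* = -\sum_i\iota(\overline{Z}_i)\nabla_{Z_i}$, and concludes via the local formula for $\D_\V$ exactly as in Theorem \ref{main1}.
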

\begin{proof}
 We need to show that the two operators agree on sections of $\S\otimes\V$.  The proof is similar to the one given in \cite{F4} for the case of strongly pseudoconvex CR manifolds of hypersurface type, except that in that paper, we were able to make use of a result in \cite{Ura} on the expression of the operator $\overline{\partial}_\nabla^*$ below in terms of the connection $\nabla$.  Let $\{\overline{Z}_i\}$ be a local frame for $E_{0,1}$, with corresponding coframe $\{\overline{\theta}^i\}$ for $E^*_{0,1}$.  We note that $\overline{\partial}_\mathcal{V}$ can be expressed locally by
\begin{equation}\label{dwloc}
 \overline{\partial}_\mathcal{V}(\alpha\otimes s) = \sum \overline{\theta}^i\wedge \overline{Z}_i(\alpha\otimes s),
\end{equation}
where $\overline{Z}_i(\alpha\otimes s) = \iota(\overline{Z}_i)\overline{\partial}_\mathcal{V}(\alpha\otimes s)$.  We define the operator $\overline{\partial}_{\nabla}:\Gamma(M,\Lambda^kE_{0,1}^*\otimes\mathcal{V})\rightarrow \Gamma(M,\Lambda^{k+1} E^*_{0,1}\otimes\mathcal{W})$ given by
\[
 \overline{\partial}_{\nabla} (\alpha\otimes s) = \sum \overline{\theta}^i\wedge (\nabla_{\overline{Z}_i}(\alpha\otimes s)),
\]
for $\alpha\otimes s\in\Gamma(M,\S\otimes\mathcal{V})$.  Then, since $\nabla^\V$ is a Hermitian connection, we have $\nabla_{\overline{Z}}^\V s = \iota(\overline{Z})(\overline{\partial}_\mathcal{V}s)$ for any $\overline{Z}\in \Gamma(M,E_{0,1})$, and therefore,
\begin{align*}
 \overline{\partial}_{\nabla}(\alpha\otimes s) &= \sum \overline{\theta}^i\wedge\left(\nabla^{LP}_{\overline{Z}_i}\alpha\otimes s + \alpha\otimes \nabla^\V_{\overline{Z}_i}s\right)\\
&= \sum\left(\overline{\theta}^i\wedge(\nabla^{LP}_{\overline{Z}_i}\alpha)\otimes s + \overline{\theta}^i\wedge\alpha\otimes (\iota(\overline{Z}_i)\overline{\partial}_\mathcal{V}s)\right)\\
&= (\overline{\partial}_b\alpha)\otimes s + (-1)^{|\alpha|}\alpha\wedge \sum \overline{\theta}^i\otimes \overline{Z}_is\\
&= (\overline{\partial}_b\alpha)\otimes s + (-1)^{|\alpha|}\alpha\wedge (\overline{\partial}_\mathcal{V} s)\\
&= \overline{\partial}_\mathcal{V}(\alpha\otimes s).
\end{align*}
We have thus established that $\overline{\partial}_\V = \overline{\partial}_\nabla$.  It is then easy to check that the proof of Corollary \ref{barstar} applies in this case: we take $\beta\in\mathcal{A}^{0,q}(M,\V)$ and $\gamma\in\mathcal{A}^{0,q+1}(M,\V)$ to be $\V$-valued differential forms, and use the metric $h$ on $\V$ to define a Hermitian pairing on $\V$-valued forms.  We can then define $\alpha(Y) = \langle \beta,\iota(Y^{0,1})\gamma\rangle$ and proceed as before: we have
\[
\Tr(\nabla^{LP}X) = \langle \nabla_{\overline{Z}_i}\beta,\iota(\overline{Z}_i)\gamma\rangle + \langle\beta,\iota(\overline{Z}_i)\nabla_{Z_i}\rangle,
\]
where on the right-hand side $\nabla$ now denotes the tensor product connection, and thus
\[
\overline{\partial}_\V^* = \overline{\partial}_\nabla^* = -\sum_{i=1}^n \iota(\overline{Z}_i)\nabla_{Z_i},
\]
from which the result follows.
\end{proof}

\section{Group actions and transversally elliptic operators}
\subsection{Transverse group actions}
Let $G$ be a compact, connected Lie group acting on a smooth manifold $M$.  Let $E\subset TM$ be a given subbundle.  We say that the action of $G$ on $M$ is {\em transverse} to $E$ if the lifted action of $G$ on $TM$ preserves $E$, and if
\begin{equation}\label{trans1}
 \mathfrak{g}_M+E = TM,
\end{equation}
where $\mathfrak{g}_M$ denotes the space of tangents to the $G$-orbits.  Alternatively, let $\theta\in\mathcal{A}^1(T^*M)$ denote the Liouville 1-form on $T^*M$.  The lifted action of $G$ on $T^*M$ is Hamiltonian with respect to the symplectic form $\omega = -d\theta$, with momentum map $\mu:T^*M\to \mathfrak{g}^*$ given by $\mu^X:=\left<X,\mu\right> = \theta(X_M)$ for all $X\in \mathfrak{g}$, where $X_M$ denotes the vector field on $M$ generated by the infinitesimal action of $X\in\g$.  We can the describe the set of covectors that vanish on $\g_M$ by $T^*_GM = \mu^{-1}(0)$.  Letting $E^0\subset T^*M$ denote the annihilator of $E$, we may rephrase the condition \eqref{trans1} as
\begin{equation}\label{trans2}
 E^0\cap T^*_GM = 0.
\end{equation}
\begin{example}
 Let $P\to B$ be a principal $G$-bundle, and let $HP\subset TP$ be the horizontal bundle with respect to a given choice of connection on $P$.  The vertical bundle $VP$ is identified with $\g_P$, and since $TP=HP\oplus VP$, it follows that the action of $G$ on $P$ is transverse to $HP$.
\end{example}
Given the action of a group $G$ on $M$, transverse to a subbundle $E$, we may, provided that $E$ is cooriented, construct a natural equivariant differential form with generalized coefficients, as described in \cite{F2}.  (An equivariant differential form with generalized coefficients, which we denote by $\alpha(X)\in\mathcal{A}^{-\infty}(M,\mathfrak{g})$, is defined in \cite{KV} to be a $G$-equivariant map from $\g$ to the space of differential forms on $M$ that can be integrated against a $G$-invariant test function on $\g$ to produce a smooth differential form on $M$.  That is, if $\alpha(X)\in\mathcal{A}^{-\infty}(M,\mathfrak{g})$, and $\psi(X)$ is a smooth $G$-invariant function with compact support in $\mathfrak{g}$, then $\int_\mathfrak{g}\alpha(X)\psi(X)dX$ is a differential form on $M$.)  Let $\iota:E^0\hookrightarrow T^*M$ denote the inclusion mapping, and let $p:E^0\to M$ denote projection onto the base.  Using the Liouville 1-form $\theta$ on $T^*M$, one can construct the smooth complex-valued differential form $e^{iD\theta(X)}$, where $D\theta(X) = d\theta-\mu(X)$ is the equivariant symplectic form on $T^*M$.
\begin{remark}
The form $e^{iD\theta(X)}$ appears frequently in symplectic geometry.  Applications considered in \cite{BGV} include equivariant localization, the exact stationary phase approximation of Duistermaat and Heckman \cite{DH}, and the Fourier transform of coadjoint orbits. It also appears in the formula of Berline and Vergne \cite{BV1,BV2} for the equivariant index of transversally elliptic operators.  
\end{remark}
Using the maps $\iota$ and $p$ as defined above, we now define a form
\begin{equation}\label{jex}
 \jex = (2\pi i)^{-k}p_*\iota^*e^{iD\theta(X)},
\end{equation}
for any $X\in\g$, where $k = \rank E^0$.  The assumption that $E$ is cooriented means that the fibres of $E^0$ are oriented, allowing us to define the fibre integration map $p_*$.  The form $\iota^*e^{iD\theta(X)}$ does not have compact support on the fibres of $E^0$, but $\jex$ is defined as an equivariant form with generalized coefficients.  We note that the condition that the group action be transverse to $E$ is necessary, as this ensures that zero is not in the image of the momentum map $\mu_E:E^0\to \g^*$ given by $\left<\mu_E,X\right> = \iota^*\theta(X_M)$.  The differential form \eqref{jex} is treated in detail in \cite{F2} and the author's thesis \cite{F3}, and so we will only outline some of its properties here.  In the case that $E^0$ is trivial, and we are given a trivializing frame $\{\eta^i\}$, let $\boldsymbol{\eta}:E^0\times\g\rightarrow \R^k$ be the map given by
\begin{equation}\label{boldeta}
 \boldsymbol{\eta}_p(X) = (\eta^1_p(X_M),\ldots, \eta^k_p(X_M)),
\end{equation}
and define the equivariant differential of $\boldsymbol{\eta}$ by
\[
 D\boldsymbol{\eta}(X) = (d\eta^1,\ldots,d\eta^k) - \boldsymbol{\eta}(X).
\]
We can then express \eqref{jex} in the more suggestive form
\[
 \jex = \eta^k\wedge\cdots\wedge\eta^1\wedge\delta_0(D\boldsymbol{\eta}(X)),
\]
where $\delta_0$ denotes the Dirac delta on $\R^k$.  While the composition of a distribution on $\R^k$ with an equivariant differential form may seem ill-defined, we can make sense of this expression either as an oscillatory integral (see \cite{Hor}) 
\[
\delta_0(D\boldsymbol{\eta}(X)) = \frac{1}{(2\pi)^k}\int_{(\R^k)^*}e^{-i\left<\zeta,D\boldsymbol{\eta}(X)\right>}d\zeta, 
\]
or via the Taylor expansion
\[
 \delta_0(D\boldsymbol{\eta}(X)) = \sum_{|I|=0}^\infty \frac{\delta_0^{(I)}(\boldsymbol{\eta}(X))}{I!}d\eta^I,
\]
where we use the multi-index notation $I=(i_1,\ldots, i_k)$, $|I|=i_1+\cdots +i_k$, $I! = i_1!\cdots i_k!$, $\delta_0 ^{(I)} = \left(\frac{\partial}{\partial x_1}\right)^{i_1}\cdots\left(\frac{\partial}{\partial x_k}\right)^{i_k}\delta_0$, and $d\eta^I = (d\eta^1)^{i_1}\wedge\cdots\wedge(d\eta^k)^{i_k}$.  (The pullback of $\delta_0^{(I)}$ by $\boldsymbol{\eta}$ to $\mathfrak{g}$ is well-defined by the hypothesis that the action of $G$ is transverse to $E$.)

If $E^0$ is not trivial, we can still make sense of the above expressions locally, and using the scaling properties of the Dirac delta, it's easy to check that the resulting expression for $\jex$ does not depend on the choice of local frame, allowing $\jex$ to be defined globally.  Using the property $x_i\delta_0(\mathbf{x})=0$ of the Dirac delta, it also follows that $\jex$ is equivariantly closed:  $D\jex = 0$.

\subsection{Transversally elliptic operators}
We now recall Atiyah's definition \cite{AT} of  $G$-transversally elliptic operator.  Let $M$ be a compact manifold, and let $\V^\pm\to M$ be two $G$-equivariant vector bundles of rank $l$.  Let $\mathsf{D}:\Gamma(M,\V^+)\to\Gamma(M,\V^-)$ be a pseudodifferential operator, and let 
\[
 \sigma_P(\mathsf{D}):\pi^*\V^+\to \pi^*\V^-
\]
be its principal symbol, where $\pi:T^*M\to M$.
\begin{definition}
 The operator $\mathsf{D}$ is called {\em $G$-transversally elliptic} if
\begin{equation}\label{transell}
 \charr(\sigma_P(\mathsf{D}))\cap T^*_G M = 0,
\end{equation}
where $\charr(\sigma_P(\mathsf{D}))=\{(x,\zeta)\in T^*M : \sigma_P(\mathsf{D})(x,\zeta) \text{ {\rm is not invertible}}\}$.
\end{definition}
We should remark that the above definition is not the most general, but it is the easiest to state, and sufficient for our purposes, since if $G$ acts on $M$ transverse to $E\subset TM$, and $\charr(\sigma_P(\mathsf{D}))\subset E^0$, then $\mathsf{D}$ is a $G$-transversally elliptic operator. In particular, the operators defined in Section \ref{diracops} above have the property that $\sigma_P(\mathsf{D})(x,\zeta) = 0$ for $\zeta\neq 0$ if and only if $\zeta\in E^0$, and thus are transversally elliptic whenever the action of $G$ is transverse to $E$.

Atiyah proved in \cite{AT} that $G$-transversally elliptic operators have a well-defined $G$-equivariant analytic index, given by the virtual character
\[
 \ind^G(\D)(g) = \Tr(g|_{\ker \mathsf{D}}) - \Tr(g|_{\ker \mathsf{D}^*}).
\]
However, unlike in the case of elliptic operators on compact manifolds, the spaces $\ker\mathsf{D}$ and $\ker\mathsf{D}^*$ can be infinite-dimensional, and the equivariant index is defined as a distribution (generalized function) on $G$, rather than as a smooth function.

Berline and Vergne were able to give a cohomological formula for the above index using equivariant differential forms \cite{BV1,BV2}.  Since the index is in general a distribution, it need not be defined pointwise; the advantage of the formula of Berline and Vergne is that it gives the germ of the index near an element $g\in G$.  For elliptic operators on compact manifolds, when the index is smooth, this formula equivalent to the Atiyah-Singer formula \cite{ASing2}, essentially by equivariant localization (see for example \cite[Chapter 8]{BGV}).  In the transversally elliptic case the Duflo-Vergne ``method of descent'' is required \cite{DV2,DV1}. 

Let us briefly recall the formula.  Let $\V=\V^+\oplus\V^-\to M$ be a $G$-equivariant $\Z_2$-graded vector bundle, and suppose $\sigma:\pi^*\V^+\to\pi^*\V^-$ is the symbol of a $G$-transversally elliptic operator (Berline and Vergne deal with what they call ``$G$-transversally good symbols'' - some care has to be taken to ensure $\sigma$ satisfies certain growth conditions on the fibres of $T^*M$).  We assume $\V$ is equipped with a $G$-invariant connection $\nabla=\nabla^+\oplus\nabla^-$ and a $G$-invariant Hermitian metric $h$.  The metric is used to define an endomorphism $v(\sigma)$ of $\pi^*\V$ given by
\[
 v(\sigma) = \begin{pmatrix} 0 & \sigma^*\\ \sigma &0\end{pmatrix},
\]
and using $\nabla$ we define a superconnection
\[
 \mathbb{A}^\theta(\sigma,\nabla,h) = \pi^*\nabla + iv(\sigma) - i\theta\cdot \Id_{\pi^*\V},
\]
where $\theta$ is the Liouville 1-form on $T^*M$.  For any $G$-space $V$, we use the notation $V(g)$ to denote the set of $g$-fixed points. The cohomological index of $\sigma$ is then given near $g\in G$, for sufficiently small $X\in \mathfrak{g}(g)$, by
\[
 \ind^G(\sigma)(ge^X) = \int_{T^*M(g)}(2\pi i)^{-\dim M(g)}\frac{\hatA^2(M(g),X)}{D_g(\mathcal{N},X)}\Ch_g(\mathbb{A}^\theta(\sigma,\nabla,h),X), 
\]
where $\hatA(M(g),X)$ is the equivariant $\hatA$-form, $D_g(\mathcal{N},X)$ is a characteristic form associated to the normal bundle to $M(g)$ in $TM$, and the equivariant Chern character is given by
\[
 \Ch_g(\mathbb{A}^\theta(\sigma, \nabla, h),X) = \Str\left(g\cdot e^{\mathbb{F}(\mathbb{A}^\theta(\sigma, \nabla,h))(X)}\vert_{\V(g)}\right).
\]
Here $\Str$ denotes the supertrace and $\mathbb{F}(\mathbb{A}^\theta(\sigma, \nabla,h))(X)$ is the equivariant curvature of the superconnection $\mathbb{A}^\theta$.  For further details, see \cite{BV1}.
The integrand in this formula is smooth, but since $\mathsf{D}$ may not be elliptic, the Chern character is not compactly supported in general, and the integral is defined in the generalized sense, by its pairing against a test function.  For elliptic symbols the Liouville form $\theta$ can be omitted, and the Chern character becomes an equivariant version Quillen's Chern character \cite{MQ,Q}. 

Later work by Paradan and Vergne \cite{PV1,PV2} showed that it is possible to replace the Chern character in the above formula by a Chern character with compact support, at the expense of allowing this Chern character to be an equivariant differential form with generalized coefficients.  Making this replacement results in the formula for the equivariant index of transversally elliptic operators announced in \cite{PV3}.  In \cite{F3} we showed that the principal symbols of the operators defined in Section \ref{diracops} are the same as the symbol mappings considered in \cite{F2} for the case of almost CR manifolds.  In \cite{F2}, we saw that for such symbols, is possible to integrate over the fibres of $T^*M$ (beginning with either the Berline-Vergne or the Paradan-Vergne formula) to obtain a formula involving the integral of equivariant characteristic forms over the compact manifold $M$.  If $TM=E\oplus T$ denotes the splitting of $TM$ by an $f$-structure $\varphi$, and $\mathsf{D}$ is an operator of the type defined in Section \ref{diracops}, then
\begin{equation}\label{ind1}
 \ind^G(\mathsf{D})(ge^X) = \int_{M(g)}(2\pi i)^{-\rank E(g)/2}\frac{\Td(E(g),X)}{D_g^\C(\mathcal{N}_E,X)}\frac{\hatA^2(T(g),X)}{D_g(\mathcal{N}_T,X)}\mathcal{J}(E(g),X)
\end{equation}
for $X\in \g(g)$ sufficiently small.  In the above formula, $\Td(E(g),X)$ denotes the equivariant Todd form; the terms corresponding to the normal bundle require a brief explanation.  We recall that for any $g\in G$, the restriction of $TM$ to $M(g)$ splits according to $TM|_{M(g)} = TM(g)\oplus\mathcal{N}$, where $\mathcal{N}$ is the normal bundle to $M(g)$.  In general, there is no reason to assume that $\mathcal{N}$ is contained entirely within $E|_{M(g)}$ or $T|_{M(g)}$, so we let $\mathcal{N}_E$ and $\mathcal{N}_T$ denote the respective intersections of these spaces with $\mathcal{N}$.  The fibres of $\mathcal{N}_E$ inherit a complex structure from the almost CR structure; the form $D_g^\C(\mathcal{N}_E,X)$ is defined using the determinant of a complex matrix, rather than the corresponding real matrix of twice the size. (This results in the identity $D_g(\mathcal{N}_E,X) = D_g^\C(\mathcal{N}_E,X)\oplus\overline{D_g^\C(\mathcal{N}_E,X)}$, and the pushforward of the Chern character includes a term that cancels with the complex conjugate.)  Finally, we note that it was proved in \cite{F2} that the action of $G(g)$ on $M(g)$ is transverse to $E(g)\subset TM(g)$, so that $\mathcal{J}(E(g),X)$ remains well-defined.

The advantage of the formula \eqref{ind1} is that it depends only on the given splitting $TM=E\oplus T$ and the group action, and does not involve any concerns such as growth conditions, since the integration is now over the compact fixed-point set $M(g)$.  Furthermore, it has the aesthetic appeal of resembling the Riemann-Roch formula: if $M$ is equipped with a $G$-invariant $f\cdot$pk-structure, then the terms corresponding to $T(g)$ above do not appear.  Moreover, if we consider instead the twisted operator $\D_\W$ acting on sections of $\S\otimes\W$, then we must include the equivariant Chern character of $\W$.  The index formula for the operator $\D_\W$ in the case of an $f\cdot$pk structure is then given near the identity in $G$ by the formula
\[
 \ind^G(\D_\W)(e^X) = \frac{1}{(2\pi i)^{\rank E/2}}\int_M \Td(E,X)\Ch(\W,X)\jex.
\]
\section{Quantization}
We end by explaining how one can produce an analogue of geometric quantization for manifolds with $f$-structure.  In the traditional approach of Kostant \cite{Kost} and Souriau \cite{Sou} in symplectic geometry, we start with a symplectic manifold $M$ with integral symplectic form $\omega$, and construct a complex ``prequantum''  line bundle (or the corresponding circle bundle) whose curvature is equal to $\omega$.   One then defines a Hilbert space given by the space of $L^2$ sections of this bundle, and proceeds to consider observables, polarizations, etc.  If a Lie group $G$ acts on $M$ in a Hamiltonian fashion, this action should then correspond to a representation of $G$ on the Hilbert space of sections.  Details can be found in the text \cite{Wood}.  

Alternatively, it is possible to use a compatible almost complex structure to define a Dirac operator acting on sections of the prequantum line bundle $\mathbb{L}$, and define a graded Hilbert space $Q(M)$ in terms of the kernel and cokernel of this operator.  In the presence of a Hamiltonian group action, this Dirac operator can be defined so that it is $G$-invariant, and one obtains a virtual $G$-representation on $Q(M)$; the character of this representation is then given by the equivariant index of the Dirac operator.  An overview of this approach can be found in \cite{SJ}; Sjamaar calls this approach ``almost complex quantization,'' and points out that we can think of $Q(M)$ as a pushforward $Q(M)=\pi_*([\mathbb{L}])$, where $\pi:M\to\{pt\}$, in equivariant $K$-theory (an idea he attributes to Bott).  Although this can involve physical absurdities such as a negative dimension for $Q(M)$, it has the advantages of requiring less structure on $M$, and allowing the use of tools such as the Atiyah-Singer index theorem.

In the symplectic setting a ``quantization'' should ideally allow us to describe a quantum system entirely in terms of ``classical'' data given by a Hamiltonian action on a symplectic manifold.  In most cases this ideal is overly ambitious, and we generally consider a quantization to be a construction of a Hilbert space $Q(M)$ associated to our manifold $M$, and an assignment $f\mapsto A_f$ of some subset of $C^\infty(M)$ (classical observables) to corresponding self-adjoint (or skew-adjoint, depending on conventions) operators on $Q(M)$ that satisfies the {\em Dirac axioms} (see \cite{GGK}).  

The similarity of the operators defined in Section \ref{diracops} to the Dirac operator used for almost complex quantization leads us to consider the results of the previous section as describing a ``quantization'' of manifolds with $f$-structure: for any $f$-structure $\varphi$, we can choose a compatible metric $g$ and connection $\nabla$, and construct the operator $\D$ defined above.  If a group $G$ acts on $M$ preserving $\varphi$, $g$, and $\nabla$, the operator $\D$ will be $G$-invariant, and the kernel and cokernel of $\D$ will become $G$-representations.  If in addition the action of $G$ is transverse to $E=\varphi(TM)$, then $\D$ will be $G$-transversally elliptic, and the character of the virtual representation $Q(M) = [\ker\D^+] -[\ker\D^-]$ is given by the equivariant index formula of the previous section.  However, in the most general case, there does not seem to be any natural way of choosing a subset of $C^\infty(M)$ to play the role of classical observables, or defining a correspondence with quantum observables.  We can say something if we impose the additional (and typical) condition $d\Phi=0$, where $\Phi$ is the fundamental 2-form associated to the pair $(\phi,g)$, but even then there is nothing canonical.

\subsection{The case $d\Phi=0$}
As we have seen, given an $f$-structure $\varphi$ and compatible metric $g$, we can define the fundamental 2-form $\Phi$, whose restriction to $E$ is non-degenerate.  In order to define an analogue of Kirillov-Kostant quantization for $f$-structures, we must additionally assume that $\Phi$ is closed, so that $(E,\Phi|_{E\otimes E})\to M$ is a symplectic vector bundle.  We can then make use of the notion from \cite{DT}, of a {\em quantum bundle}: we say that a Hermitian line bundle with connection $(\mathbb{L}, h,\nabla)$ is a quantum bundle over $(M,E,\Phi)$ if the restriction of the curvature of $\nabla$ to $E\otimes E$ is equal to $\Phi|_{E\otimes E}$. (If $M$ is symplectic and $E=TM$, we recover the usual definition of a prequantum line bundle.)  We can include the line bundle $\mathbb{L}$ in the differential operator approach described above by making use of the twisted Dirac operator $\D_\mathbb{L}$ acting on sections of $\S\otimes\mathbb{L}$, and defining $Q(M) = [\ker\D_\mathbb{L}^+]-[\ker\D_\mathbb{L}^-]$.

We can also consider an analogue of traditional geometric quantization.  We suppose that $M$ is oriented and equipped with an $f$-structure $\varphi$ and compatible metric $g$, and that the fundamental 2-form $\Phi$ is closed.  We let $(\mathbb{L},h,\nabla)$ be a quantum bundle over $M$.
An inner product on the space of sections of $\mathbb{L}$ is given by
\[
 (s_1,s_2) = \int_M h(s_1,s_2)\mu,
\]
where $\mu$ is a choice of volume form on $M$.  In the case of a metric $f\cdot$pk-structure $(\varphi, g, \xi_i,\eta^i)$, we take $\mu = \eta^1\wedge\cdots \wedge\eta^k\wedge \Phi^n$.  Thus, we may define a prequantization of $(M,\varphi,g)$ to be the space of $L^2$ sections of $\mathbb{L}$ with respect to the above inner product. 

If this structure is CR-integrable, then we have a natural analogue of complex polarization: the CR structure $E_{1,0}\subset T_\C M$ determined by $\varphi$ is integrable, isotropic with respect to the $\C$-linear extension of $g$ to $T_\C M$, and satisfies $E_{1,0}\cap\overline{E_{1,0}} = 0$.  (We drop the maximal rank requirement in favour of the condition $E_{1,0}\oplus E_{0,1} = E\otimes \C$.)  We can then define the quantization of $(M,\varphi,g)$ to be the space of polarized $L^2$ sections of $\mathbb{L}$. Assuming $\mathbb{L}$ is CR-holomorphic and that $\nabla$ is Hermitian, the space of polarized sections of $\mathbb{L}$ is given by the CR-holomorphic sections of $\mathbb{L}$. 

\begin{remark}
When $M$ has a CR-integrable almost $\S$-structure, and $\mathbb{L}$ is CR-holomorphic, we saw in Section \ref{3.1} that $\D_\mathbb{L} = \sqrt{2}(\overline{\partial}_\mathbb{L}+\overline{\partial}_\mathbb{L}^{\, *})$.  It's easy to check that $\ker\D_\mathbb{L} = \ker\D_\mathbb{L}^2 = \{\gamma\in\Gamma(M,\S\otimes\mathbb{L})| \overline{\partial}_\mathbb{L}\gamma=\overline{\partial}_\mathbb{L}^{\,*}\gamma=0\}$. Let us write $\D_\mathbb{L} = \D_\mathbb{L}^+\oplus \D_\mathbb{L}^- :\S^+\otimes\mathbb{L}\oplus \S^-\otimes\mathbb{L}\to \S^-\otimes\mathbb{L}\oplus \S^+\otimes\mathbb{L}$, so that $(\D_\mathbb{L}^+)^* = \D_\mathbb{L}^-$.  Letting $\mathcal{H}^{0,i}_b(M,\mathbb{L})$ denote the space of CR-harmonic $(0,i)$-forms on $M$ with values in $\mathbb{L}$, we can write $\ker\D_\mathbb{L}^+ = \bigoplus^{n/2}_{i=1} \mathcal{H}_b^{0,2i}(M,\mathbb{L})$ and $\ker\D_\mathbb{L}^- = \bigoplus_{i=1}^{n/2} \mathcal{H}^{0,2i-1}_b(M,\mathbb{L})$.
Thus, it is tempting to define $Q(M)$ in terms of the spaces of $\mathbb{L}$-valued CR-harmonic forms of degree $i$, according to
\begin{equation}\label{anind}
 \ind^G(\D_\mathbb{L}) = \sum_{i=0}^n (-1)^i \mathcal{H}^{0,i}_b(M,\mathbb{L}).
\end{equation}
This should be viewed as a formal expression in general; although, as mentioned in Remark \ref{GGKrem}, we can make sense of it as a virtual representation in the case that $\D_\mathbb{L}$ is transversally elliptic.  We may also wish to consider replacing the spaces $\mathcal{H}^{0,i}_b(M,\mathbb{L})$ by some appropriate cohomology groups.  A natural option would be the (twisted) Kohn-Rossi cohomology groups,  but since $Q(M)$ is in general infinite-dimensional,  it may be more appropriate, from the point of view of quantization, to define $Q(M)$ in terms of some $L^2$ cohomology for the $\overline{\partial}_\mathbb{L}$ operator.  (In particular, this would make sense in the almost $\S$ case, where we have a preferred metric and measure; see \cite{GGK}). One advantage of this approach is that the degree zero part of $Q(M)$ then agrees with the definition of $Q(M)$ as the space of CR-holomorphic sections of $\mathbb{L}$.  
\end{remark}

\begin{example}
 Let $(M,\varphi, g, \xi_i,\eta^i)$ be an almost $\S$-manifold.  Let $\mathbb{L} = M\times\C$, equipped with the Hermitian form $h((x,z_1),(x,z_2)) = z_1\overline{z_2}$.  Let $s(x)=(x,f(x))\in\Gamma(M,\mathbb{L})$ denote a section of $\mathbb{L}$, and define a connection on $\mathbb{L}$ by
\begin{equation}\label{lconn}
 \left(\nabla_X s\right)(x) = (x, (X\cdot f)(x) - i\eta(X)f(x)),
\end{equation}
where $\eta(X) = \dfrac{1}{k}\sum \eta^i(X)$.  (We could equally well use any of the individual $\eta^i$, but this choice seems more democratic.)  We then compute that
\[
 \left[\nabla_X,\nabla_Y\right] = [X,Y]-iX\cdot(\eta(Y))+iY\cdot(\eta(X)), 
\]
from which it follows that for $X,Y\in\Gamma(M,E)$,
\[
 \curv\nabla(X,Y) = \left[\nabla_X,\nabla_Y\right]-\nabla_{[X,Y]} = i\Phi(X,Y),
\]
so that $(\mathbb{L},h,\nabla)$ is a quantum bundle over $(M,E,\Phi)$.  Moreover, if we define $(\overline{\partial}_\mathbb{L}s)(x) = (x,(\dbbar f)_x)$, then $\mathbb{L}$ is a CR holomorphic vector bundle, and the connection given by \eqref{lconn} satisfies \eqref{hermconn}.
\end{example}
\subsection{Observables}
 In the above construction of a Hilbert space of sections of a quantum bundle over a manifold with $f$-structure whose fundamental 2-form is closed, we have not provided any discussion of ``observables''.  Since we have a distinguished closed 2-form on $M$, it is possible to construct the Kostant algebra $\mathcal{K}(M,\Phi)$ \cite{Kost,Vais1}.  This is the central extension of the Lie algebra $\Gamma(M,TM)$ given by pairs $(f,X)\in C^\infty(M)\times \Gamma(M,TM)$, together with the bracket
\[
 [(f,X),(g,Y)] = (X\cdot g - Y\cdot f + \Phi(X,Y), [X,Y]).
\]
Given a quantum bundle $(\mathbb{L},h,\nabla)$, we can associate to each $(f,X)\in\mathcal{P}(M,\Phi)$ the skew-Hermitian operator
\begin{equation}\label{Lop}
 A_{(f,x)} = \nabla_X + if 
\end{equation}
on sections of $\mathbb{L}$.  

In general, there is no canonical notion of Hamiltonian vector field for $f$-structures other than those associated to symplectic or contact structures.  In the almost $\S$ case it is possible to define a map $f\mapsto X_f$ from functions to vector fields in a manner after the approach in \cite{Lich1} for contact manifolds, although the resulting vector fields $X_f$ in general will not define infinitesimal symmetries of the almost $\S$-structure.  To do this, we proceed as follows: denote by $\overline{\xi} = \sum \xi_i$ and $\overline{\eta} = \frac{1}{k}\sum\eta^i$.  For any $f\in C^\infty(M)$, we define $X_f$ to be the vector field such that $\iota(X_f)\eta^i = f$ for each $i=1,\ldots, k$, and $\iota(X_f)\Phi = df - (\overline{\xi}\cdot f)\overline{\eta}$. This uniquely defines $X_f$, but it is clearly not canonical.  With this definition we have
\[
 \mathcal{L}(X_f)\eta^i = (\overline{\xi}\cdot f)\overline{\eta}
\]
for each $i$, so that the $X_f$ can't be considered infinitesimal symmetries of the almost $\S$-structure.  However, we can define a bracket on $C^\infty(M)$ by
\[
 \{f,g\} = \iota([X_f,X_g])\overline{\eta} = X_f\cdot g - (\overline{\xi}\cdot f)g;
\]
again, we note that the value of the bracket does not change if we replace $\overline{\eta}$ with any of the $\eta^i$, due to the fact that $\Phi = -d\eta^i$ for each $i$.  Let us confirm that this is in fact a Lie bracket.  Clearly, it suffices to prove the following:
\begin{proposition}\label{hamprop}
 The vector field $X_{\{f,g\}}$ corresponding to the function $\{f,g\}$ is given by $X_{\{f,g\}} = [X_f,X_g]$.
\end{proposition}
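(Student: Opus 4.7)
The plan is to verify directly that $[X_f,X_g]$ satisfies the two conditions uniquely characterizing $X_{\{f,g\}}$, namely $\iota([X_f,X_g])\eta^i = \{f,g\}$ for each $i$, and $\iota([X_f,X_g])\Phi = d\{f,g\} - (\overline{\xi}\cdot\{f,g\})\overline{\eta}$. Both will follow from Cartan's formula $\iota([X,Y]) = [\mathcal{L}(X),\iota(Y)]$, once the Lie derivatives $\mathcal{L}(X_f)\eta^i$ and $\mathcal{L}(X_f)\Phi$ are computed. Using $\Phi = -d\eta^i$ and the defining properties of $X_f$, Cartan's formula immediately gives
\[
\mathcal{L}(X_f)\eta^i = -\iota(X_f)\Phi + df = (\overline{\xi}\cdot f)\overline{\eta},
\]
and substituting into $\iota([X_f,X_g])\eta^i = \mathcal{L}(X_f)\iota(X_g)\eta^i - \iota(X_g)\mathcal{L}(X_f)\eta^i$ yields $X_f\cdot g - (\overline{\xi}\cdot f)g = \{f,g\}$, establishing the first condition.

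For the second, the analogous computation uses $\mathcal{L}(X_f)\Phi = -d((\overline{\xi}\cdot f)\overline{\eta})$ together with the identity $d\overline{\eta} = -\Phi$ (since $\Phi = -d\eta^i$ for each $i$). After expanding and using $\iota(X_g)\Phi = dg-(\overline{\xi}\cdot g)\overline{\eta}$, the $(\overline{\xi}\cdot f)(\overline{\xi}\cdot g)\overline{\eta}$ contributions cancel in pairs, the remaining $d(\overline{\xi}\cdot f)$ and $dg$ terms combine to $-d((\overline{\xi}\cdot f)g)$, and one is left with
\[
\iota([X_f,X_g])\Phi = d\{f,g\} + \bigl[X_g\cdot(\overline{\xi}\cdot f) - X_f\cdot(\overline{\xi}\cdot g)\bigr]\overline{\eta}.
\]
It therefore suffices to prove the scalar identity $X_g\cdot(\overline{\xi}\cdot f) - X_f\cdot(\overline{\xi}\cdot g) = -\overline{\xi}\cdot\{f,g\}$, which I will call $(\ast)$.

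To establish $(\ast)$, I will first verify two subsidiary facts. First, the bracket is antisymmetric: contracting $dg = \iota(X_g)\Phi + (\overline{\xi}\cdot g)\overline{\eta}$ with $X_f$ gives $X_f\cdot g = -\Phi(X_f,X_g) + (\overline{\xi}\cdot g)f$, so that
\[
\{f,g\} = -\Phi(X_f,X_g) + (\overline{\xi}\cdot g)f - (\overline{\xi}\cdot f)g,
\]
from which $\{f,g\}=-\{g,f\}$ is manifest. Second, $[\overline{\xi},X_f] = X_{\overline{\xi}\cdot f}$: this is verified by checking both defining conditions, using $\mathcal{L}(\overline{\xi})\eta^i = 0$ (a direct consequence of $\iota(\overline{\xi})\eta^i = 1$ and $\iota(\overline{\xi})\Phi = 0$) and hence $\mathcal{L}(\overline{\xi})\Phi = 0$. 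Given these, expand $-\overline{\xi}\cdot\{f,g\}$ via the Leibniz rule, rewrite $\overline{\xi}\cdot(X_f\cdot g) = X_f\cdot(\overline{\xi}\cdot g) + X_{\overline{\xi}\cdot f}\cdot g$ using $[\overline{\xi},X_f] = X_{\overline{\xi}\cdot f}$, and recognize the resulting combination as $\{\overline{\xi}\cdot f,g\} + \{g,\overline{\xi}\cdot f\}$, which vanishes by antisymmetry; this yields $(\ast)$.

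The main obstacle is bookkeeping in the Cartan-formula expansion of $\iota([X_f,X_g])\Phi$: the terms involving $d(\overline{\xi}\cdot f)\wedge\overline{\eta}$, $(\overline{\xi}\cdot f)\,dg$, and $(\overline{\xi}\cdot f)(\overline{\xi}\cdot g)\overline{\eta}$ appear with various signs and must be tracked carefully to obtain the clean reduction to $(\ast)$. The key conceptual point, which makes the proof work without any normality hypothesis on the almost $\mathcal{S}$-structure, is that the obstruction to $[X_f,X_g]$ being Hamiltonian in the $\Phi$-sense is precisely antisymmetry of the bracket applied to the shifted pair $(\overline{\xi}\cdot f, g)$.
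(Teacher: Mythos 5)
Your proof is correct and follows essentially the same route as the paper: verify both defining conditions of $X_{\{f,g\}}$ via Cartan's formula, with the $\Phi$-condition reducing to the scalar identity $\overline{\xi}\cdot\{f,g\} = X_f\cdot(\overline{\xi}\cdot g) - X_g\cdot(\overline{\xi}\cdot f)$. The only (cosmetic) difference is that you work directly with the summed field $\overline{\xi}$ and close the argument via antisymmetry of the bracket, whereas the paper establishes the per-index commutator $[\xi_i,X_f]=X_{\xi_i\cdot f}$ and the derivation property $\xi_i\cdot\{f,g\}=\{\xi_i\cdot f,g\}+\{f,\xi_i\cdot g\}$ (its Lemmas \ref{LL1} and \ref{LL2}) and then sums over $i$.
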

\begin{lemma}\label{LL1}
 For each $i=1,\ldots, k$, we have $[\xi_i,X_f] = X_{\xi_i\cdot f}$.
\end{lemma}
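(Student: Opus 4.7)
The plan is to verify that $Z := [\xi_i, X_f]$ satisfies the two relations characterizing $X_{\xi_i \cdot f}$, namely $\iota(Z)\eta^j = \xi_i \cdot f$ for each $j$ and $\iota(Z)\Phi = d(\xi_i \cdot f) - (\overline{\xi} \cdot (\xi_i \cdot f))\overline{\eta}$. The key computational tool is the commutator identity $\iota([X,Y])\alpha = \mathcal{L}(X)\iota(Y)\alpha - \iota(Y)\mathcal{L}(X)\alpha$, which I will apply with $X = \xi_i$ and $Y = X_f$ to the forms $\alpha = \eta^j$ and $\alpha = \Phi$ in turn.

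First I would record two preliminary facts valid on any almost $\S$-manifold. Since $\phi\xi_i = 0$ and $\Phi(Y,Z) = g(\phi Y, Z)$, we have $\iota(\xi_i)\Phi = 0$; Cartan's formula together with $d\eta^j = -\Phi$ then yields $\mathcal{L}(\xi_i)\eta^j = d\delta_{ij} - \iota(\xi_i)\Phi = 0$, so that $\xi_i$ preserves each $\eta^j$ (and hence $\overline{\eta}$). Analogously, $d\Phi = 0$ combined with $\iota(\xi_i)\Phi = 0$ gives $\mathcal{L}(\xi_i)\Phi = 0$.

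Applying the commutator identity to $\eta^j$ and using $\iota(X_f)\eta^j = f$ immediately gives $\iota(Z)\eta^j = \xi_i \cdot f$, establishing the first property. Applying it to $\Phi$ and using the definition of $X_f$ gives $\iota(Z)\Phi = \mathcal{L}(\xi_i)(df - (\overline{\xi}\cdot f)\overline{\eta}) = d(\xi_i \cdot f) - (\xi_i \cdot \overline{\xi} \cdot f)\overline{\eta}$, where the term $(\overline{\xi}\cdot f)\mathcal{L}(\xi_i)\overline{\eta}$ drops out by the preliminary observation. To match the second characterizing property it therefore remains to show that $\xi_i \cdot \overline{\xi} \cdot f = \overline{\xi} \cdot \xi_i \cdot f$, i.e.\ that $\sum_j [\xi_i, \xi_j] \cdot f = 0$, and this control over brackets of the $\xi$'s is the main obstacle.

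I would settle this by proving the stronger (and independently useful) statement that $[\xi_i, \xi_j] = 0$ for all $i,j$, arguing that $[\xi_i, \xi_j]$ lies in both $E$ and $T$. Evaluating $\mathcal{L}(\xi_i)\eta^l = 0$ on $\xi_j$ gives $\eta^l([\xi_i, \xi_j]) = -\eta^l([\xi_i,\xi_j]) = 0$ for every $l$, so $[\xi_i, \xi_j] \in E$. On the other hand, the commutator identity applied to $\Phi$ with $Y = \xi_j$ yields $\iota([\xi_i, \xi_j])\Phi = \mathcal{L}(\xi_i)\iota(\xi_j)\Phi - \iota(\xi_j)\mathcal{L}(\xi_i)\Phi = 0$, and since $\iota(Y)\Phi = 0$ iff $\phi Y = 0$ iff $Y \in T$, this forces $[\xi_i, \xi_j] \in T$. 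Hence $[\xi_i, \xi_j] \in E \cap T = \{0\}$, which completes the verification of both defining relations and therefore the equality $[\xi_i, X_f] = X_{\xi_i \cdot f}$.
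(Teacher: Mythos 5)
Your proof is correct and follows essentially the same route as the paper's: both verify the two relations defining $X_{\xi_i\cdot f}$ by applying the identity $\iota([\xi_i,X_f]) = [\mathcal{L}(\xi_i),\iota(X_f)]$ to $\eta^j$ and to $\Phi$. The only difference is that the paper simply cites \cite{DIP} for the facts $[\xi_i,\xi_j]=\mathcal{L}(\xi_i)\eta^j=\mathcal{L}(\xi_i)\Phi=0$, whereas you derive them from $\iota(\xi_i)\Phi=0$, $d\eta^j=-\Phi$, $d\Phi=0$ and the decomposition $TM=E\oplus T$; your derivations are sound and make the argument self-contained.
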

\begin{proof}
 From \cite{DIP}, we know that $[\xi_i,\xi_j]=\mathcal{L}(\xi_i)\eta^j = \mathcal{L}(\xi_i)\Phi = 0$ for any $i,j\in\{1,\ldots,k\}$.  Thus, 
\[
\iota([\xi_i,X_f])\eta^i = [\mathcal{L}(\xi_i),\iota(X_f)]\eta^j = \xi_i\cdot f, 
\]
and
\[
 \iota([\xi_i,X_f])\Phi = \mathcal{L}(\xi_i)(df - (\overline{\xi}\cdot f)\overline{\eta}) = d(\xi_i\cdot f) - (\overline{\xi}\cdot(\xi_i\cdot f))\overline{\eta}.\qedhere
\]
\end{proof}
\begin{lemma}\label{LL2}
 For each $i=1,\ldots, k$, we have $\xi_i\cdot\{f,g\} = \{\xi_i\cdot f, g\} + \{f, \xi_i\cdot g\}$.
\end{lemma}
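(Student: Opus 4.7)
The plan is to expand both sides of the desired identity using the explicit definition of the bracket $\{f,g\} = X_f \cdot g - (\overline{\xi}\cdot f)g$, and reduce everything to the commutation relations that are already available: Lemma \ref{LL1}, which gives $[\xi_i, X_f] = X_{\xi_i\cdot f}$, and the fact, recalled from \cite{DIP} in the proof of Lemma \ref{LL1}, that $[\xi_i,\xi_j]=0$ for all $i,j$ (so in particular $\xi_i$ commutes with $\overline{\xi}$ as a derivation on $C^\infty(M)$).

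More concretely, I would first write out the left-hand side as
\[
\xi_i\cdot\{f,g\} = \xi_i\cdot(X_f\cdot g) - \xi_i\cdot\bigl((\overline{\xi}\cdot f)g\bigr) = \xi_i\cdot X_f\cdot g - (\xi_i\overline{\xi}\cdot f)g - (\overline{\xi}\cdot f)(\xi_i\cdot g),
\]
using the Leibniz rule for $\xi_i$ in the second term. Then I would expand the right-hand side directly from the definition of $\{\cdot,\cdot\}$:
\[
\{\xi_i\cdot f,g\}+\{f,\xi_i\cdot g\} = X_{\xi_i\cdot f}\cdot g - (\overline{\xi}\cdot(\xi_i\cdot f))g + X_f\cdot(\xi_i\cdot g) - (\overline{\xi}\cdot f)(\xi_i\cdot g).
\]

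The key step is to apply Lemma \ref{LL1} to rewrite $X_{\xi_i\cdot f}\cdot g = [\xi_i,X_f]\cdot g = \xi_i\cdot X_f\cdot g - X_f\cdot\xi_i\cdot g$, after which the term $X_f\cdot(\xi_i\cdot g)$ cancels in the right-hand side, leaving
\[
\xi_i\cdot X_f\cdot g - (\overline{\xi}\xi_i\cdot f)g - (\overline{\xi}\cdot f)(\xi_i\cdot g).
\]
Matching against the left-hand side reduces the whole identity to $\xi_i\overline{\xi}\cdot f = \overline{\xi}\xi_i\cdot f$, which is immediate from $[\xi_i,\xi_j]=0$.

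There is no real obstacle here; the only thing one has to be careful about is keeping the Leibniz-rule terms straight and noticing that the two appearances of $(\overline{\xi}\cdot f)(\xi_i\cdot g)$ on the two sides match up automatically, so the identity reduces purely to the derivation identity $[\xi_i,X_f]=X_{\xi_i\cdot f}$ together with the commutativity of the $\xi_j$. This is a short, essentially bookkeeping, argument.
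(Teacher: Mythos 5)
Your proposal is correct and uses exactly the same ingredients as the paper's own proof: the Leibniz rule, Lemma \ref{LL1} in the form $[\xi_i,X_f]=X_{\xi_i\cdot f}$, and the commutativity $[\xi_i,\overline{\xi}]=0$. The only cosmetic difference is that you expand both sides and match, whereas the paper rewrites the left-hand side directly into the right-hand side; the computation is the same.
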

\begin{proof}
 We have, using Lemma \ref{LL1} and the fact that $[\xi_i,\overline{\xi}]=0$ in the second line,
\begin{align*}
\xi_i\cdot\{f,g\} & = \xi_i\cdot (X_f\cdot g) - \xi_i\cdot ((\overline{\eta}\cdot f)g)\\
& =  X_f\cdot (\xi_i\cdot g) - (\overline{\eta}\cdot f)(\xi_i\cdot g) + X_{\xi_i\cdot f}\cdot g - \overline{\eta}(\xi_i\cdot f)g\\
& = \{f,\xi_i\cdot g\} +  \{\xi_i\cdot f,g\}.\qedhere
\end{align*}
\end{proof}
\begin{proof}[Proof of Proposition \ref{hamprop}]
 We already know that $\iota([X_f,X_g])\eta^i = \{f,g\}$ by definition, so it remains to check that
\[
 \iota([X_f,X_g])\Phi = d\{f,g\} - (\overline{\xi}\cdot\{f,g\})\overline{\eta}.
\]
Summing over $i$ in Lemma \ref{LL2}, we have $\overline{\xi}\cdot\{f,g\} = \{f,\overline{\xi}\cdot g\}-\{g,\overline{\xi}\cdot f\} = X_f\cdot(\overline{\xi}\cdot g) - X_g\cdot(\overline{\xi}\cdot f)$.  Thus, we find
\begin{align*}
 \iota([X_f,X_g])\Phi & = \mathcal{L}(X_f)(dg - (\overline{\xi}\cdot g)\overline{\eta}) - \iota(X_g)(-d(\overline{\xi}\cdot f)\wedge\overline{\eta} + (\overline{\xi}\cdot f)\Phi)\\
& = d(X_f\cdot g) - X_f\cdot(\overline{\xi}\cdot g) - (\overline{\xi}\cdot g)(\overline{\xi}\cdot f)\overline{\eta}+ X_g\cdot(\overline{\xi}\cdot f)\overline{\eta}\\
& \quad \quad  - gd(\overline{\xi}\cdot f) - (\overline{\xi}\cdot f)(dg-(\overline{\xi}\cdot g)\overline{\eta})\\
& = d(X_f\cdot g - (\overline{\xi}\cdot f)g) - (X_f\cdot(\overline{\xi}\cdot g) - X_g\cdot(\overline{\xi}\cdot f)\overline{\eta}\\
& = d\{f,g\} - \overline{\xi}\{f,g\}\overline{\eta}.\qedhere
\end{align*}
\end{proof}
We also have the notion from \cite{GGK} of the ``Poisson algebra of a closed 2-form'' this is a Lie subalgebra of $\mathcal{K}(M,\Phi)$ given by
\[
 \mathcal{P}(M,\Phi) = \{(f,X)\in C^\infty(M)\times\Gamma(M,TM) : \iota(X)\Phi = df\},
\]
for the fundamental 2-form $\Phi$.  The bracket of $\mathcal{K}(M,\Phi)$ restricts to the bracket
\[
\{(f,X),(g,Y)\} = (X\cdot g, [X,Y]), 
\]
which is a Poisson bracket with respect to the multiplication $(f,X)\cdot (g,Y) = (fg,gX-fY)$.  Moreover, notice that if $(f,X)\in\mathcal{P}(M,\Phi)$, then $Y\cdot f=0$ for any $Y\in\Gamma(M,\ker\phi)$, since
\[
 Y\cdot f = df(Y) = \Phi(X,Y) = -g(X,\varphi Y) = 0.
\]
In particular, we can consider the pairs $(f,X_f)$, where $X_f$ is the ``Hamiltonian'' vector field corresponding to $f$ defined above, in the case of an almost $\S$-manifold.  Clearly, such a pair belongs to $\mathcal{P}(M,\Phi)$ if and only if $\xi_i\cdot f=0$ for $i=1,\ldots, k$.  By Lemma \ref{LL2}, the set of such pairs forms a subalgebra $\mathcal{P}_b(M,\Phi)\subset \mathcal{P}(M,\Phi)$.  Moreover, if $(f,X_f)\in \mathcal{P}_b(M,\Phi)$, we see that $X_f$ is indeed an infinitesimal symmetry, in the sense that $\mathcal{L}(X_f)(\xi_i) = \mathcal{L}(X_f)\eta^j = \mathcal{L}(X_f)\Phi = 0$ for all $i,j$.
 
\begin{remark}
We note that since $\Phi$ is closed, the distribution $T=\ker\varphi$ is integrable: it's clear that $X\in\Gamma(M,T)$ if and only if $\iota(X)\Phi=0$, and thus, for any $X,Y\in\Gamma(M,T)$ and $Z\in\Gamma(M,TM)$, we have
\[
0=d\Phi(X,Y,Z) = -\Phi([X,Y],Z),
\]
from which it follows that $[X,Y]\in\Gamma(M,T)$.  In \cite{BLY} it was proved that a compact regular $\S$-manifold is a principal torus bundle over a K\"ahler manifold.  (Here, {\em regular} means that the foliation defined by the distribution $T$ is regular, so that the leaf space $M/T$ is a smooth manifold.)  Similarly, a compact regular almost $\S$-manifold is a principal torus bundle over a symplectic manifold.  (A proof of this fact will appear in a forthcoming paper.)  Identifying the $\R^k$-valued form $\boldsymbol{\eta}$ defined by \eqref{boldeta} with a connection form, we can identify $\mathcal{P}(M,\Phi)$ with the horizontal lift of the Poisson algebra associated to the symplectic structure on $M/T$.

When the distribution $T$ is not regular, the algebra $\mathcal{P}(M,\Phi)$ is typically much smaller, and indeed it may be that pairs $(f,X)$ satisfying $df = \iota(X)\Phi$ are only locally defined.  In this case it may be more appropriate to replace $\mathcal{P}(M,\Phi)$ by a suitable space of sheaves, but we have not investigated the usefulness of doing so.
\end{remark}  
\subsection{Stable complex structures and ``symplectization''}
We conclude with a few speculative remarks regarding our construction and some related ideas.  We recall that a stable complex structure on a manifold $M$ is a complex structure defined on the fibres of $TM\oplus \R^k$ for some $k$.  Given an $f\cdot$pk-structure $(\phi,\xi_i,\eta^j)$ on $M$, we obtain a stable complex structure $J\in\Gamma(M,\End(TM\oplus\R^k))$ by setting $JX=\phi X$ for $X\in\Gamma(M,E)$, and defining $J\xi_i = \tau_i$ and $J\tau_i = -\xi_i$, where $\tau_1,\ldots,\tau_k$ is a basis for $\R^k$.  As explained in \cite{GGK}, a stable complex structure determines a Spin$^c$-structure on $M$.  Of course, the resulting Spin$^c$-Dirac operator is elliptic, and the corresponding quantization will be different.

Alternatively, (and with some abuse of notation), we can think of the above complex structure on each fibre $T_xM\times \R^k$ as coming from an almost complex structure on $M\times\R^k$ obtained from to the $f$-structure $\phi$.  We can then define an elliptic Dirac operator associated to this almost complex structure on the non-compact manifold $M\times\R^k$, and this operator, in turn descends to an (again elliptic) operator on $M$.  This approach is used in \cite{Nic} in the case where $M$ is a contact manifold ($k=1$).  We see in \cite{Nic} that in the contact case, the difference between the resulting elliptic operator and the operator $\D$ given by our construction is given by the Lie derivative in the direction of the Reeb vector field.  This suggests that one way to think of our operator is as a deformation of a corresponding elliptic differential operator.  

Finally, we note that almost $\S$-manifolds can be thought of as the higher corank analogues of contact manifolds, and it is possible to define a ``symplectization'' of such manifolds analogous to the symplectization of a cooriented contact manifold.  As above, we let $TM = E\oplus T$ denote the splitting of the tangent bundle determined by the $f$-structure, and let $E^0\cong T^*=\Span\{\eta^i\}\cong M\times \R^k$ denote the annihilator of $E$.  We then have the open  submanifold $E^0_+$ of $E^0$ given by $t_i>0$ for $i=1,\ldots, k$, where $(x,t_1,\ldots,t_k)\in M\times\R^k$. Intrinsically, we can define $E^0_+$ to be the subset of $E^0\setminus 0$ such that $\eta^i(M)\subset E^0_+$ for all $i$, and consider the 2-form $\omega$ given by the pullback to $E^0_+$ of the standard symplectic structure on $T^*M$.  For concreteness, let us use the identification $E^0\cong M\times \R^k$, and with respect to coordinates $(x,t_i,\ldots, t_k)$, let
\[
 \alpha = \sum_{i=1}^k t_i\eta^i,
\]
and define $\omega = -d\alpha$.  Using the fact that $d\eta^i = \Phi$ for each $i$, one can check that
\[
 \omega^{n+k} = \frac{(n+k)!}{n!}\left(\sum_{i=1}^k t_i\right)^ndt_1\wedge \eta^1\wedge\cdots\wedge dt_k\wedge \eta^k\wedge\Phi^n,
\]
and thus the open subset of $M\times\R^k$ defined by $t_i>0$, $i=1,\ldots, k$ is a symplectic submanifold.

\bibliographystyle{alpha}
\bibliography{reference}
\end{document}